\newcounter{dummy} \numberwithin{dummy}{section}
\newtheorem{theorem}[dummy]{Theorem}
\newtheorem{lemma}[dummy]{Lemma}
\newtheorem{corollary}[dummy]{Corollary}
\let\oldfact\fact
\renewcommand{\fact}{\oldfact\normalfont}
\newtheorem{question}[dummy]{Question}
\let\oldquestion\question
\renewcommand{\question}{\oldquestion\normalfont}
\newtheorem{definition}[dummy]{Definition}
\let\olddefinition\definition
\renewcommand{\definition}{\olddefinition\normalfont}
\newtheorem{remark}[dummy]{Remark}
\let\oldremark\remark
\renewcommand{\remark}{\oldremark\normalfont}
\newtheorem{example}[dummy]{Example}
\let\oldexample\example
\renewcommand{\example}{\oldexample\normalfont}
\DeclareMathOperator{\diam}{diam}
\DeclareMathOperator{\compdiam}{comp-diam}
\DeclareMathOperator{\interior}{int}
\DeclareMathOperator{\RACG}{RACG}
\DeclareMathOperator{\Link}{Link}
\DeclareMathOperator{\Star}{Star}
\newcommand{\N}{\mathbb{N}}
\newcommand{\ZZ}{\mathbb{Z}} 
\newcommand{\Ttwo}{\mathbb{T}^2}
\newcommand{\RPtwo}{\mathbb{R}P^2}
\newcommand{\Stwo}{\mathbb{S}^2}
\newcommand{\Sone}{\mathbb{S}^1}
\newcommand{\sided}[1]{\overrightarrow{#1}}
\newcommand{\sidedop}[1]{\overleftarrow{#1}}
\newcommand{\cl}[1]{\overline{#1}}
\newcommand{\SL}[1]{\mathcal{#1}}
\newcommand{\Cells}[1]{\mathcal{#1}}
\newcommand{\embedsin}{\hookrightarrow}
\newcommand{\organism}{organism\xspace}
\newcommand{\mitosis}{mitosis\xspace}
\newcommand{\invlim}{\varprojlim}
\newcommand{\sidesof}{\leftthreetimes}
\newcommand{\actson}{\curvearrowright}
\newcommand{\nbd}[1]{\mathcal{U}_{#1}}
\newcommand{\homeo}{\cong}
\newcommand{\editA}[1]{{\color{black} {#1}}}
\title{Surface-like boundaries of hyperbolic groups}
\author{Benjamin Beeker and Nir Lazarovich\thanks{Supported by the Israel Science Foundation (grant No. 1562/19), and by the German-Israeli Foundation for Scientific Research and Development}}
\date{}
\begin{document}

\maketitle
\begin{abstract}
	We classify the boundaries of hyperbolic groups that have enough quasiconvex codimension-1 surface subgroups with trivial or cyclic intersections.
\end{abstract}

\section{Introduction}
By \cite{kahn2012immersing,bergeron2012boundary}, every fundamental group of a closed hyperbolic 3-manifold acts geometrically on a CAT(0) cube complex with surface subgroup hyperplane stabilizers. That is, the hyperplane stabilizers are virtually fundamental groups of closed surface subgroups of genus $\ge 2$. It is also known that the boundary of such a group is homeomorphic to $\Stwo$.

In \cite{markovic2013criterion} the converse is proved. That is, if $G$ is a hyperbolic group with $\partial G \cong \Stwo$ and $G$ acts geometrically on a CAT(0) cube complex with surface subgroup hyperplane stabilizers, then $G$ is virtually the fundamental group of a closed hyperbolic 3-manifold.
This gives a criterion for Cannon's Conjecture, which asks if $\partial G \cong \Stwo$ is sufficient to determine that $G$ is virtually the fundamental group of a closed hyperbolic 3-manifold.

In \cite{beeker2017detecting}, it is shown that assuming that $G$ is a one-ended hyperbolic group that acts geometrically on a CAT(0) cube complex with surface subgroup hyperplane stabilizers, and additionally has vanishing 1st cohomology with $\ZZ/2$ coefficients at infinity, then $\partial G \cong \Stwo$. The additional cohomological assumption is also necessary.

In this paper we continue our study of boundaries of one-ended hyperbolic groups that act geometrically on a CAT(0) cube complex with surface subgroup hyperplane stabilizers, and show the following classification under an additional assumption of `simple' intersections of hyperplanes.

\begin{theorem}\label{main thm}
	Let $G$ be a one-ended hyperbolic group that acts geometrically on a CAT(0) cube complex with surface subgroup hyperplane stabilizers. If all hyperplane stabilizers intersect in finite \editA{or} virtually $\ZZ$ subgroups then $\partial G$ is  homeomorphic to $\Stwo$, the Pontryagin sphere or the Pontryagin surface $\Pi(2)$.
\end{theorem}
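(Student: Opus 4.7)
The plan is to exhibit $\partial G$ as an inverse limit of closed surfaces obtained by cutting along limit circles of hyperplane stabilizers, and then identify the limit with one of the three model spaces using a Jakobsche-type characterization.

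First, I would set up the limit circle structure. Each hyperplane stabilizer $H = \Stab(h)$ is a quasiconvex virtual surface group of genus $\geq 2$, so the limit set $\Lambda H \subseteq \partial G$ is a Jordan curve, and because $h$ separates the cube complex into two halfspaces, $\Lambda H$ separates $\partial G$ into exactly two complementary regions. The intersection hypothesis provides the crucial rigidity input: invoking the standard principle $\Lambda H_1 \cap \Lambda H_2 = \Lambda(H_1 \cap H_2)$ for quasiconvex subgroups (Gitik-Mitra-Rips-Sageev), any two distinct limit circles are either disjoint or meet in exactly the two fixed points of a virtually cyclic intersection.

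Next, I would build a combinatorial filtration of $\partial G$. For each $n$, let $\mathcal{H}_n$ be the collection of hyperplanes within combinatorial distance $n$ of a basepoint in $X$; the union $\Gamma_n = \bigcup_{h \in \mathcal{H}_n} \Lambda(\Stab(h))$ is then a finite graph embedded in $\partial G$, whose edges are arcs of limit circles and whose vertices are the simple crossing pairs supplied by Step 1. The key geometric step is to prove that every connected component of $\partial G \setminus \Gamma_n$ is an open topological disk. Here I would combine the surface-like local structure of $\partial G$ established in \cite{beeker2017detecting} with the observation that each component corresponds to a deep region of $X$ whose frontier in $\partial G$ is traced out by finitely many arcs of $\Gamma_n$. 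Collapsing each such component to a point yields a closed (possibly non-orientable) surface $Y_n$, and after verifying that the mesh of the cellulation tends to zero, one obtains $\partial G \cong \invlim Y_n$ with natural bonding maps induced by the inclusions $\mathcal{H}_n \subseteq \mathcal{H}_{n+1}$.

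Finally, I would appeal to Jakobsche's axiomatic characterization of the Pontryagin sphere and the Pontryagin surface $\Pi(2)$ as the unique inverse limits of closed surfaces whose bonding maps satisfy a suitable surjectivity-on-every-disk condition, with orientability of the surfaces $Y_n$ distinguishing the two. The trichotomy then splits as follows: either the cellulations eventually stabilize topologically, in which case $\partial G \cong \Stwo$ and we recover \cite{beeker2017detecting}; or they do not, and the answer is the Pontryagin sphere if the $Y_n$ are eventually orientable and $\Pi(2)$ otherwise, the distinction being detected by the $\ZZ/2$-cohomology at infinity considered in \cite{beeker2017detecting}. The main obstacle, and the technical heart of the argument, is the disk lemma in Step 2: proving that the complementary components of $\Gamma_n$ really are $2$-cells rules out pinch points and requires a careful analysis of how deep hyperplanes accumulate on the frontier of a shallow region, making essential use of both the surface-subgroup hypothesis and the simple-intersection assumption.
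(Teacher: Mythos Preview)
Your broad outline---approximate $\partial G$ by graphs built from limit circles of hyperplane stabilizers, realize these combinatorially on closed surfaces, take an inverse limit, and invoke a Jakobsche-type classification---is exactly the paper's strategy. The gap is in the step you yourself flag as ``the technical heart'': the disk lemma is not merely hard, it is false in the interesting cases.

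If, for some fixed $n$, every component of $\partial G\setminus\Gamma_n$ were an open topological disk, then the open dense set $\partial G\setminus\Gamma_n$ would be a $2$-manifold, and hence $\partial G$ would have a dense set of manifold points. But the Pontryagin sphere and $\Pi(2)$ are homogeneous and are not manifolds, so they have \emph{no} manifold points whatsoever. Thus your disk lemma can only hold when $\partial G\cong\Stwo$, which is precisely the case already handled in \cite{beeker2017detecting}; in the two new cases the complementary regions are genuinely wild, and no amount of care about accumulation of deep hyperplanes will make them into disks. Consequently the quotient ``collapse each component to a point'' does not produce a surface, and you have no bonding maps to feed into Jakobsche.

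The paper circumvents this by never attempting to understand the topology of the complementary cells. Instead it extracts from a PH-graph $X\subset\Xi$ only the \emph{organism} data: the fat-graph structure on $X$ together with the partition of its boundary circuits according to which cell of $\Xi\setminus X$ they bound. It then builds, in parallel and by hand, a \emph{tame animal} $(O,\Omega)$---a graph on an honest closed surface whose complementary regions are holed spheres---realizing the same organism. The surface $\Omega$ is not obtained by collapsing anything in $\Xi$; it is grown one edge at a time, and each elementary extension may force a connected sum with $\Stwo$, $\Ttwo$, or $\RPtwo$ (Lemmas~\ref{lem: type 1 mitosis}, \ref{lem: type 2 mitosis}). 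A back-and-forth argument alternately refines the wild side (Lemma~\ref{lem: refine wild partition}) and the tame side (Lemma~\ref{PONG}) so that $\compdiam\to 0$ on both, and a direct limit argument then yields the homeomorphism. The genuine technical heart is not a disk lemma but the \emph{cancer treatment} (Lemma~\ref{lem: cancer treatment}): when an added edge produces the ``wrong'' twin of a type~2 extension, one exploits a nearby $1$-sided PH-curve to correct it by a further benign extension. A smaller point you also elide: two H-circles meeting in two points need not be transverse (Lemma~\ref{lem: two state}), so your raw $\Gamma_n$ is only a pre-PH-graph and must be perturbed as in Lemma~\ref{lem: pre PH to PH} before it carries a fat-graph structure.
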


Recall that the Pontryagin sphere can be obtained by taking the standard Sierpi\'{n}ski carpet and gluing together the opposite sides of each square as one glues a torus from a square. The surface $\Pi(2)$ is obtained in a similar way, by gluing the boundary of each square in the same way one glues a Klein bottle from a square.

\editA{
Let $G$ be a hyperbolic group. we say that $G$ has \emph{enough codimension-1 quasiconvex surface subgroups} if for every two points in $\partial G$ there exists a quasiconvex surface subgroup $H\le G$ such that $\partial H \subset \partial G$ separates the two points.
By results of Sageev \cite{sageev1995ends} and Bergeron-Wise \cite{bergeron2012boundary}, $G$ has enough codimension-1 quasiconvex surface subgroups if and only if $G$ acts geometrically on a CAT(0) cube complex with surface hyperplane stabilizers.
Hence, the assumption in Theorem \ref{main thm} can be changed to ``let $G$ be a one-ended hyperbolic group with enough codimension-1 quasiconvex surface subgroups with trivial or cyclic intersections''.}

\editA{
Examples of such groups include right-angled Coxeter groups $\RACG(L)$ where $L$ is the 1-skeleton of a triangulation of a closed surface without induced 4-cycles. $\RACG(L)$ acts on the associated Davis complex. The hyperplane stabilizers are isomorphic to $\RACG(\Star(v,L))$ for some vertex $v\in L$, where $\Star(v,L)$ is the star of the vertex of $v$ in $L$. Since the link $\Link(v,L)$ is a cycle graph, $\RACG(\Star(v,L))$ is virtually a surface group. The intersection of two hyperplane stabilizers is isomorphic to the $\RACG(\Star(v,L) \cap \Star(w,L))$ which is finite or virtually cyclic. However, the boundary of such groups is already known to be isomorphic to one of the spaces above by \cite{swiatkowski2013treesofmflds}.
}

The idea of the proof is to use the $\Sone$ boundaries of the hyperplanes to form a sequence of graphs that approximates $\partial G$. The way these graphs partition the boundary into components endows them with additional data that describes how to embed them in closed surfaces. The corresponding sequence of graphs in surfaces forms an inverse system whose inverse limit is homeomorphic to $\partial G$. Such inverse limits are known as \emph{trees of manifolds} (or as \emph{Jakobsche spaces}) and can be determined uniquely by the types of surfaces that appear in the sequence. These spaces were studied extensively, also in relation to boundaries of hyperbolic groups, in \cite{jakobsche1980bing,ancel1985construction,jakobsche1991homogeneous,stallings1995extension,zawislak2010trees,swiatkowski2013treesofmflds,swiatkowski2013treesofcompacta,swikatkowski2019reflection}. We will use a characterization due to Świątkowski \cite{swiatkowski2013treesofcompacta}. While the idea of the proof is straightforward, the proof is quite involved, as various topological anomalies need to be understood and addressed.

We end this introduction with the following question.

\begin{question}
	Assuming that $\dim \partial G=2$, is Theorem \ref{main thm} true without the additional assumption on intersections of hyperplanes?
\end{question}

\editA{Note that the boundary $\partial G$ must be of dimension $\le 2$ since it has enough separating circles.}
A full classification of 1-dimensional boundaries of groups that do not split over two-ended subgroups was given in \cite{kapovich2000hyperbolic}, and some work has been done in understanding the full picture of 1-dimensional boundaries (e.g \cite{swikatkowski2019reflection}). The classification of 2-dimensional boundaries is much less complete, and the question above can be seen as a step in this direction. 

Generically, quasiconvex surface subgroup tend to intersect along free groups and not on cyclic groups. Our theorem should be thought of as the `totally geodesic' surfaces case, and not the general quasiconvex case.

\editA{
\paragraph{Acknowledgements.} We would like to thank Michah Sageev for his comments, and  the anonymous referee for their valuable suggestions.
}

\section{A zoo of objects}

Let $G$ be a one-ended hyperbolic group, and assume that $G$ acts geometrically on a CAT(0) cube complex with surface hyperplane stabilizers that intersect in virtually cyclic groups. Let $\Xi$ be the boundary of the group $G$.

\subsection{The separation lemmas}

\begin{definition}
A \emph{hyperplane circle} or an \emph{H-circle} is the limit set in $\Xi$ of a codimension-1 quasiconvex surface subgroup in $G$.
A \emph{hyperplane arc} or an  \emph{H-arc}  is a subinterval of an H-circle.
\end{definition}


\begin{lemma}[Circles decompose into two components]\label{lem: two components}
	Let $\alpha$ be an H-circle, then $\Xi-\alpha$ consists of exactly two components. The boundary in $\Xi$ of each component is $\alpha$. Moreover, every H-circle $\beta$ that separates two points of $\alpha$ must intersect both components of $\alpha$.
\end{lemma}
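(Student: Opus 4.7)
The plan is to exploit the CAT(0) cube complex $X$ on which $G$ acts: the H-circle $\alpha$ is the limit set $\Lambda(H)$ of a codimension-1 quasiconvex surface subgroup $H$, which stabilizes a hyperplane $\hyp{H}$ of $X$. Since $\hyp{H}$ separates $X$ into two convex half-spaces $X^+$ and $X^-$, I will show these are the source of the two components of $\Xi\setminus\alpha$.

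Set $\Lambda^\pm := \Lambda(X^\pm)$. Standard facts about limit sets of convex subcomplexes and quasiconvexity of the hyperplane stabilizer give that $\Lambda^\pm$ are closed in $\Xi$, $\Lambda^+\cup\Lambda^- = \Xi$ (every combinatorial geodesic ray in $X$ crosses $\hyp{H}$ at most once and is eventually in one half-space), and $\Lambda^+\cap\Lambda^- = \alpha$ (a boundary point approached from both sides corresponds to rays that fellow-travel $\hyp{H}$ and hence limit in $\alpha$). Setting $U^\pm := \Lambda^\pm\setminus\alpha = \Xi\setminus\Lambda^\mp$, each $U^\pm$ is open as the complement of a closed set, and the pair partitions $\Xi\setminus\alpha$. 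To verify the boundary in $\Xi$ of each $U^\pm$ equals $\alpha$, it suffices to show $\alpha\subseteq\overline{U^\pm}$: for $\xi\in\alpha$ approached by $h_n x_0\to\xi$ with $x_0\in\hyp{H}$, the shifted sequence $h_n y_0$, where $y_0$ is a neighbor of $x_0$ in $X^\pm$, stays bounded distance from $h_n x_0$, still converges to $\xi$, and lies off $\hyp{H}$, placing $\xi$ in $\overline{U^\pm}$.

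The main obstacle is proving each $U^\pm$ is connected. I would argue this by using that $X^+$ has a single ``deep direction'' away from $\hyp{H}$: the hyperplane is the only way to exit $X^+$ inside $X$, so after removing any combinatorial neighborhood of $\hyp{H}$, the half-space $X^+$ has a single unbounded component. Limit points not in $\alpha$ correspond to rays escaping through this single deep direction, so $U^+$ inherits the connectedness. Concretely, a clopen decomposition $U^+ = A\sqcup B$ would force, via connectedness of $\Xi$, both $A$ and $B$ to accumulate on $\alpha$; since $\alpha$ is itself connected, some $p\in\alpha$ lies in $\overline{A}\cap\overline{B}$, and the cocompact $H$-action on $\hyp{H}$ together with the single deep direction on each side should rule this out.

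Finally, for the \emph{moreover} statement, apply the just-established first part to $\beta$, writing $\Xi\setminus\beta = V^+\sqcup V^-$ with $p\in V^+$ and $q\in V^-$ and $\partial V^\pm = \beta$. Suppose for contradiction that $\beta\cap U^- = \emptyset$. Then $U^-$ is connected and disjoint from $\beta$, so it lies entirely in one of $V^+, V^-$, say $U^-\subseteq V^+$. But $q\in\alpha = \partial U^-$ is in $\overline{U^-}$, and $V^-$ is an open neighborhood of $q$, giving $V^-\cap U^-\ne\emptyset$ and contradicting $U^-\subseteq V^+$. Hence $\beta$ meets both $U^+$ and $U^-$.
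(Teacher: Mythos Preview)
Your approach is genuinely different from the paper's, and the gap you flag yourself---connectedness of $U^{\pm}$---is real and not closed by the heuristic you give. The ``single deep direction'' picture is suggestive, but limit sets of convex (or quasiconvex) subsets need not be connected in general, and the sketch with a clopen split $U^{+}=A\sqcup B$ does not lead anywhere without an additional ingredient: knowing that both $A$ and $B$ accumulate on a common point of $\alpha$ does not by itself contradict local connectedness or the $H$-action. To make your route work you would need a genuine input such as an ends-of-pairs argument (the number of components of $\partial G\setminus\Lambda H$ equals the number of filtered ends of $H\backslash G$, which for a hyperplane stabilizer is two), or a direct argument that $\Lambda(X^{+})\setminus\alpha$ is path-connected using geodesic rays in the half-space. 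As written, this step is missing.

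The paper avoids this issue entirely by arguing intrinsically in $\Xi$ and using the standing hypothesis that any two H-circles meet in at most two points. It does not try to identify the components in advance; instead it takes an arbitrary component $A$ of $\Xi\setminus\alpha$ and shows $\partial A=\alpha$ by contradiction: if $\partial A\subsetneq\alpha$, pick endpoints $a,a'$ of a complementary arc $\alpha'\subseteq\alpha\setminus\partial A$, choose an H-circle $\beta$ separating $a$ from $a'$, note that $\beta$ must meet $A$ (else $A$ and hence $\overline{A}\ni a,a'$ would lie on one side of $\beta$), and then use $|\beta\cap\alpha|=2$ to force one of the two intersection points into $\alpha'\cap\partial A$, a contradiction. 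The ``moreover'' clause is proved by the same containment trick you use, and combined with $|\beta\cap\alpha|=2$ it yields that $\beta\setminus\alpha$ has only two arcs, hence at most two components of $\Xi\setminus\alpha$; the existence of at least two is the codimension-1 hypothesis. So the paper's argument trades the cube-complex geometry for the two-point intersection property, and never needs to prove connectedness of a candidate half directly. Your ``moreover'' paragraph, by contrast, is fine and matches the paper's reasoning.
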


\begin{proof}
	Let $A$ be a component of $\Xi-\alpha$. First we show that $\partial A = \alpha$. Otherwise $\partial A \subsetneq \alpha$ is a closed set. $\partial A$ is not a singleton since boundaries of one-ended hyperbolic groups have no cut points (by \cite{bowditch1998group,swarup1996cut}). Let $a$ and $a'$ be two distinct points of $\partial A$ that are the endpoints of a segment $\alpha'\subseteq \alpha - \partial A$. Let $\beta$ be an H-circle that separates $a,a'$. Then $\beta$ must intersect $A$ since otherwise there is a component $B$ of $\Xi-\beta$ such that $A \subseteq B$, but then $\{a,a'\}\subseteq \partial A \subseteq B$. By our assumption $|\beta\cap\alpha |=2$. Since $\beta$ separates $a,a'$ it must intersect $\alpha'$ in exactly one point $b$. Since $\beta$ intersects $A$ and $A$ is a connected component of $\Xi-\alpha$ we conclude that the point $b$ is in $\partial A$, contradicting the assumption on $\alpha'$.
	
	Let $\beta$ be an H-circle that separates two points $a,a'\in \alpha- \beta$. Then $\beta$ must intersect all the components of $\Xi-\alpha$. Otherwise, there is a component $A\subseteq \Xi-\alpha$ and a component $B\subseteq \Xi-\beta$ such that $A\subseteq B$. But then $a,a'\in B$.
\end{proof}

A connected component $A$ of the complement $\Xi - \alpha$ of an H-circle will be called an \emph{H-neighborhood}, and $\alpha$ will be called its \emph{bounding H-circle}.
It is easy to check that the H-neighborhoods form a basis for the topology of $\Xi$.

\begin{lemma}\label{lem: two state}
	Let $\alpha,\beta$ be two H-circles which intersect in two points $c,c'$. The two segments of $\alpha-\{c,c'\}$ are in the same component of $\Xi-\beta$ if and only if the two segments of $\beta-\{c,c'\}$ are in the same component of $\Xi-\alpha$.
\end{lemma}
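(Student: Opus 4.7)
The plan is to prove the two directions by symmetry, so I would focus on one implication and observe that swapping the roles of $\alpha$ and $\beta$ gives the other. Denote the two arcs of $\alpha - \{c,c'\}$ by $\alpha_1, \alpha_2$ and the two arcs of $\beta - \{c,c'\}$ by $\beta_1, \beta_2$. Write $A_1, A_2$ for the two components of $\Xi - \alpha$ (which exist by Lemma~\ref{lem: two components}) and $B_1, B_2$ for those of $\Xi - \beta$.

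Assume $\alpha_1, \alpha_2$ lie in the same component of $\Xi - \beta$, say $B_1$. I would argue by contradiction that $\beta_1$ and $\beta_2$ then also lie in a common component of $\Xi - \alpha$. Suppose not: then after relabeling $\beta_1 \subseteq A_1$ and $\beta_2 \subseteq A_2$. Picking any $b_1 \in \beta_1$ and $b_2 \in \beta_2$, these are two points of $\beta$ separated by $\alpha$. By the last assertion of Lemma~\ref{lem: two components} (with the roles of $\alpha$ and $\beta$ swapped), the H-circle $\alpha$ must then meet both components $B_1$ and $B_2$ of $\Xi - \beta$.

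On the other hand, the hypothesis gives $\alpha - \{c,c'\} = \alpha_1 \cup \alpha_2 \subseteq B_1$, and since $c, c' \in \beta$, the whole of $\alpha$ is contained in $B_1 \cup \beta$, which is disjoint from $B_2$. This contradicts the previous paragraph, so $\beta_1$ and $\beta_2$ must indeed lie in the same component of $\Xi - \alpha$. Swapping $\alpha$ and $\beta$ in this argument yields the converse, completing the proof.

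The main obstacle is really just keeping the bookkeeping clean: the subtle point is that in order to apply the ``separation implies meeting both components'' statement of Lemma~\ref{lem: two components} to $\alpha$, we must be sure that $\alpha$ actually separates two points of $\beta$, which is exactly why we pick the witnesses $b_1, b_2$ from the two different arcs $\beta_1, \beta_2$. Everything else is routine manipulation of the two-component decompositions.
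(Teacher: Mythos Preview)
Your argument is correct. You directly apply the ``moreover'' clause of Lemma~\ref{lem: two components}: if $\alpha$ separates two points of $\beta$ then $\alpha$ must meet both components of $\Xi-\beta$, and this immediately contradicts the hypothesis $\alpha-\{c,c'\}\subseteq B_1$.

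The paper takes a different route. It introduces an auxiliary H-circle $\gamma$ separating $c$ from $c'$, so that $\gamma$ meets each of $\alpha$ and $\beta$ in two points; it then analyzes the possible cyclic orders of these four points on $\gamma$ and observes that the two ``unlinked'' versus ``linked'' configurations correspond exactly to the two sides of the biconditional. Your approach is shorter and more self-contained for the lemma as stated. The paper's approach, on the other hand, yields an extra dividend: the cyclic order of the intersection points on $\gamma$ is independent of the choice of $\gamma$ and encodes precisely whether $\alpha$ and $\beta$ are transverse. This observation is recorded just after the lemma and feeds into the later analysis (e.g.\ Lemma~\ref{lem: 4 components}), so the auxiliary $\gamma$ is doing double duty there.
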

\begin{proof}
	Let $\gamma$ be an H-circle separating $c,c'$, and let $a,a'$ and $b,b'$ be its intersections with $\beta$ and $\alpha$ respectively. Then, the cyclic order of intersection points on $\gamma$ is either $a,a',b,b'$ or $a,b,a',b'$ (up to interchanging $a,a'$ or $b,b'$). Since $\gamma$ must meet both sides of $\Xi-\beta$ and $\Xi-\alpha$, we can see that either the two segments of $\alpha-\{c,c'\}$ are separated by $\beta$ and the two segments of $\beta-\{c,c'\}$ are separated by $\alpha$ or none of them is separated by the other.
\end{proof}

This enables us to define transversality of intersecting H-circles and H-arcs.

\begin{definition}
	Let $\alpha$ and $\beta$ be intersecting H-circles. If $\alpha$ separates two points of $\beta$ (which by the previous lemma is equivalent to $\beta$ separating two points of $\alpha$), we say $\alpha$ and $\beta$ are \emph{transverse}. Similarly, if $\alpha',\beta'$ are intersecting H-arcs, we say that $\alpha'$ and $\beta'$ are transverse if the corresponding H-circles are transverse.
\end{definition}

The proof of Lemma \ref{lem: two state} also shows that for an H-circle $\gamma$ that separates the intersection points $c,c'$ of $\alpha,\beta$ the circular order in which it meets $\alpha,\beta$ is independent of $\gamma$, and depends only on whether $\alpha$ and $\beta$ are transverse.

\begin{lemma}\label{lem: 4 components}
	Let $\alpha,\beta$ be transverse H-circles bounding the H-neighborhoods $A,B$ respectively, let $c,c'$ be their two intersection points and let $\alpha_1,\alpha_2$ and $\beta_1,\beta_2$ be the two pairs of H-arcs bounded by $c,c'$ on $\alpha$ and $\beta$ respectively. Then $A\cap B,A \cap B^c, A^c\cap B, A^c\cap B^c$ are the 4 components of $\Xi - (\alpha\cup\beta)$  components, the boundary of each component is one of the 4 possible circles of the form $\alpha_i\cup\beta_j$.
\end{lemma}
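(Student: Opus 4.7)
The plan is to verify, in order: (i) the four sets are pairwise disjoint and cover $\Xi-(\alpha\cup\beta)$; (ii) each is nonempty; (iii) each is connected, which is the main obstacle; (iv) each has the claimed boundary.

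Steps (i)--(ii): Disjointness and covering are immediate from the decompositions $\Xi-\alpha=A\sqcup A^c$ and $\Xi-\beta=B\sqcup B^c$ given by Lemma \ref{lem: two components}. For nonemptiness, transversality of $\alpha,\beta$ forces the two open arcs of $\alpha\setminus\{c,c'\}$ to lie in opposite components of $\Xi-\beta$, and symmetrically for $\beta\setminus\{c,c'\}$. After relabeling I may assume the interior of $\alpha_1$ lies in $B$, the interior of $\alpha_2$ in $B^c$, the interior of $\beta_1$ in $A$, and the interior of $\beta_2$ in $A^c$. To see $A\cap B\neq\emptyset$, pick any $p$ in the interior of $\beta_1$; then $p\in A$ and $p\in\partial B$ by Lemma \ref{lem: two components}, so every neighborhood of $p$ meets $B$, and restricting to a neighborhood contained in the open set $A$ produces a point of $A\cap B$. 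The other three sets are nonempty by the symmetric argument.

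Step (iii) is the main obstacle. Suppose for contradiction that $A\cap B$ has a connected component $C\subsetneq A\cap B$. Since $\Xi$ is locally connected (a standard property of boundaries of one-ended hyperbolic groups), $C$ is open in $\Xi$ and clopen in $A\cap B$; combining $\overline C\subset\overline A\cap\overline B=(A\cup\alpha)\cap(B\cup\beta)$ with the relabeling above, one computes $\overline A\cap\overline B\setminus(A\cap B)=\alpha_1\cup\beta_1$, and therefore $\partial C\subset\alpha_1\cup\beta_1$. I would then mimic the proof of Lemma \ref{lem: two components}: pick $x\in C$ and $x'$ in another component of $A\cap B$, choose an H-circle $\gamma$ separating them, and note that $\gamma$ must meet both $A$ and $B$ (else $A$ or $B$ would lie in a single component of $\Xi-\gamma$, contradicting the separation of $x$ from $x'$). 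A case analysis on $|\gamma\cap\alpha|,|\gamma\cap\beta|\in\{0,1,2\}$, combined with the cyclic-order information from Lemma \ref{lem: two state} whenever $\gamma$ is transverse to $\alpha$ or $\beta$, will show that $\gamma$ must either cross $\alpha\cup\beta$ in a pattern forcing a third intersection point on $\alpha$ or $\beta$ (impossible under the hypothesis on hyperplane intersections), or else it already connects $x$ to $x'$ through $A\cap B$, contradicting that they lie in different components.

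Step (iv) is then essentially free. The inclusion $\partial(A\cap B)\subset\alpha_1\cup\beta_1$ has already been established, and the reverse inclusion holds because every point of $\alpha_1\cup\beta_1$ is a limit of points of $A\cap B$: for interior points of $\alpha_1$ and $\beta_1$ this is the neighborhood argument of step (ii), and for the endpoints $c,c'$ it follows because every small H-neighborhood of $c$ or $c'$ meets each of the four regions $A\cap B$, $A\cap B^c$, $A^c\cap B$, $A^c\cap B^c$ by transversality. The three remaining boundaries $\alpha_1\cup\beta_2$, $\alpha_2\cup\beta_1$, $\alpha_2\cup\beta_2$ follow by the same analysis.
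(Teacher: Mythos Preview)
Your steps (i), (ii), and (iv) are fine and match the paper's reasoning. The gap is in step (iii), which is the heart of the lemma.

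First, a small correction: two distinct H-circles meet in $0$ or $2$ points under the standing hypothesis on hyperplane intersections, so your proposed trichotomy $|\gamma\cap\alpha|\in\{0,1,2\}$ should read $\{0,2\}$. More seriously, the case analysis is only gestured at, and in the hardest case---$\gamma$ meeting both $\alpha$ and $\beta$ in two points each---you would need to understand how the four arcs of $\gamma\setminus(\alpha\cup\beta)$ distribute among the regions. That is essentially the content of Lemma~\ref{lem: 8 components}, which is \emph{deduced from} the lemma you are trying to prove, so invoking it here would be circular. Your suggested mechanism ``forcing a third intersection point on $\alpha$ or $\beta$'' does not obviously arise: nothing in the setup of a separating $\gamma$ produces more than two intersections with either circle. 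In short, the global separating-circle strategy does not reduce to something simpler than the lemma itself.

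The paper takes a different, local route. It fixes an arbitrary component $C$ of $\Xi-(\alpha\cup\beta)$ and uses \emph{small} H-circles around points of $\alpha\cup\beta$ to prove three facts: (a) if $\overline C$ meets the interior of some $\alpha_i$ then $\alpha_i\subset\overline C$; (b) at most two components have closure meeting the interior of a given $\alpha_i$ (because a small H-circle there has only two complementary arcs); (c) if $\overline C$ meets $c$ or $c'$ then it meets the interior of some $\alpha_i$ and some $\beta_j$. Together these give at most four components, and a small H-circle around $c$ (whose four arcs land one in each of $A\cap B$, $A\cap B^c$, $A^c\cap B$, $A^c\cap B^c$) gives at least four. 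This local counting argument is what you should replace your step (iii) with.
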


\begin{proof}
	Let $C$ be a component of $\Xi-(\alpha\cup\beta)$. First we note that if $\cl{C}$ intersects the interior of the arc $\alpha_1$ then $\cl{C}$ contains $\alpha_1$. Otherwise, let $(a,a']$ be an interval in $\alpha_1 - \cl{C}\cap \alpha_1$ such that $a\in \cl{C}\cap \alpha_1$. Let $S$ be a small H-neighborhood separating $a$ and $a'$ and disjoint from $\beta$, and let $\sigma$ be its bounding H-circle. The H-circle $\sigma$ must intersect $C$, and thus the two intersection points of $\sigma$ with $\alpha_1$ must belong to $\cl{C}$, one of these two points must be in  $(a,a')$, giving a contradiction.
	Similarly if $\cl{C}$ intersects the interior of $\alpha_2$ (resp. $\beta_j$) then it contains the entire H-arc $\alpha_2$ (resp. $\beta_j$).
	
	It is also clear that there are at most 2 components of $\Xi-(\alpha\cup\beta)$ whose closure intersects the interior of some $\alpha_i$, because the bounding H-circle $\gamma$ of a small H-neighborhood around an interior point of $\alpha_i$ must intersect every such connect component, but $\gamma - (\alpha\cup\beta)$ has 2 components.
	
	If $\cl{C}$ intersects $c$ then it intersects the interior of some $\alpha_i$ and $\beta_j$. To show this, let $S$ be a small H-neighborhood of $c$ whose bounding H-circle $\sigma$ separates $c$ from some point of $C$, then as before $C$ must intersect $\sigma$, and thus must contain two intersection points of $\sigma$ with $\alpha\cup\beta$. Since $\alpha$ and $\beta$ are transverse, one of the intersection points is on $\alpha$, say on $\alpha_i$, and one is on $\beta$, say on $\beta_j$.
	
	Combining the above we see that any connected component of $\Xi - (\alpha\cup\beta)$ must intersect the interior of $\alpha_i$ and $\beta_j$ for some $i,j$. On the other hand, for each $k$ there are only 2 connected components whose closure intersect $\alpha_k$, and similarly for $\beta_k$. This shows that there are at most 4 components. Every connected component must be in one of 4 possible intersections of $A$ or $A^c$ with $B$ or $B^c$, and a small H-circle around $c$ shows that there is a component in each of the 4 intersections. 
	This shows these intersections are indeed the connected components of $\Xi - (\alpha\cup\beta)$.
%
%
\end{proof}

Similarly one proves the following.

\begin{lemma}\label{lem: 8 components}
	Let $\alpha,\beta,\gamma$ be pairwise transverse H-circles which bound the H-neighborhoods $A,B,C$ such that each of the H-circles separates the intersection points of the other two. Then the 8 possible intersections of $A,B,C$ and their complements are the 8 components of $\Xi - (\alpha\cup\beta\cup\gamma)$. \qed
\end{lemma}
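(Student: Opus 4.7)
The plan is to follow the template of Lemma~\ref{lem: 4 components} nearly verbatim, replacing the four arcs $\alpha_i,\beta_j$ with the twelve arcs cut out on $\alpha,\beta,\gamma$ by the six pairwise intersection points, and adjusting the counting. By transversality and the three separation hypotheses, on each of $\alpha,\beta,\gamma$ the four intersection points with the other two circles alternate in the cyclic pattern ``one from each of the other two'' (by the observation just after Lemma~\ref{lem: two state}), giving $12$ arcs and $6$ vertices.

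For the lower bound $n\ge 8$, I would use small H-neighborhoods at the vertices. Around $c=c^1_{\alpha\beta}$, choose a small H-neighborhood $U$ whose bounding H-circle avoids $\gamma$; this is possible because $\gamma$ separates $c^1_{\alpha\beta}$ from $c^2_{\alpha\beta}$. Applying Lemma~\ref{lem: 4 components} to the pair $\alpha,\beta$ inside $U$ yields points of all four regions $A^{\pm}\cap B^{\pm}$, all on the same side of $\gamma$. Repeating at $c^2_{\alpha\beta}$, which lies on the opposite side of $\gamma$, covers the remaining four sign patterns, so all eight intersections $A^{\pm}\cap B^{\pm}\cap C^{\pm}$ are non-empty.

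For the upper bound $n\le 8$, I would establish three structural claims whose proofs transcribe the three steps in the proof of Lemma~\ref{lem: 4 components}: (i) if $\cl{D}$ meets the interior of an arc $\tau$ then $\cl{D}\supseteq\tau$; (ii) at most two components have $\tau$ in their closure (a small H-circle around an interior point of $\tau$ avoids the other two circles and is cut by $\tau$ into only two complementary arcs); and (iii) if a vertex $v$ lies in $\cl{D}$, then $\cl{D}$ contains arcs of both circles passing through $v$. For (iii), take a small H-circle $\sigma$ around $v$, disjoint from the third circle and separating $v$ from a fixed point of $D$; connectedness of $D$ forces $D\cap\sigma\ne\emptyset$, and each connected component of $D\cap\sigma$ is a maximal open sub-arc of $\sigma$ whose two endpoints lie in $\partial D\cap\sigma\subseteq(\alpha\cup\beta)\cap\sigma$, a four-point set alternating on $\sigma$ in the cyclic pattern $\alpha,\beta,\alpha,\beta$. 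Since such a component is contained in a single arc of $\sigma\setminus(\alpha\cup\beta)$, its endpoints are the two consecutive vertices bounding that arc, one on $\alpha$ and one on $\beta$. Claim~(i) then upgrades these endpoints into full arcs of $\alpha$ and $\beta$ inside $\cl{D}$.

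Chaining (i) and (iii) shows that $\cl{D}$ contains at least one arc of each of $\alpha,\beta,\gamma$ for every component $D\subsetneq\Xi$: pick any $p\in\partial D\ne\emptyset$, use (i) or (iii) to produce a full arc in $\cl{D}$, then apply (iii) at the endpoints of that arc --- which are vertices involving the remaining circle(s) --- to obtain the missing arcs. Counting arc-component incidences via~(ii) yields $3n\le 2\cdot 12=24$, hence $n\le 8$; combined with the lower bound, $n=8$, and since the eight non-empty disjoint intersections partition the set of components, each contains exactly one. I expect (iii) to be the step requiring the most care, specifically verifying --- via $\partial D\subseteq\alpha\cup\beta\cup\gamma$ and the smallness of $\sigma$ --- that no component of $D\cap\sigma$ ends strictly inside one of the arcs of $\sigma\setminus(\alpha\cup\beta)$, so that its endpoints are forced to be vertices of the arrangement.
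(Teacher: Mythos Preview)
Your proposal is correct and follows essentially the same approach the paper intends: the lemma is stated with a \qed and the preceding line ``Similarly one proves the following'' indicates that the argument is meant to transcribe the proof of Lemma~\ref{lem: 4 components}, which is precisely what you do. Your organization into claims (i)--(iii), the use of the alternation observation after Lemma~\ref{lem: two state} to control the cyclic order of the four intersection points on each circle, and the incidence count $3n\le 2\cdot 12$ are all sound; the point you flag as most delicate in (iii) is indeed handled by the observation that endpoints of components of $D\cap\sigma$ lie in $\partial D\cap\sigma\subseteq(\alpha\cup\beta)\cap\sigma$, forcing each such component to coincide with a full arc of $\sigma\setminus(\alpha\cup\beta)$.
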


\subsection{Piecewise hyperplane graphs}

Now we shall introduce the main tool -- PH graphs (piecewise-hyperplane graphs).

\begin{definition}
	Let $D,Y$ be topological spaces, a \emph{$D$ in $Y$} (resp. an \emph{embedded $D$ in $Y$}) is a continuous map (resp. a topological embedding) $D\to Y$ up to precomposition with a homeomorphism of $D$.
\end{definition}

\begin{definition}[PH-graphs]
	If $D$ is a finite (topological) graph whose vertices have degrees $\le 4$, we say that $D$ in $\Xi$ (resp. an embedded $D$ in $\Xi$) is a \emph{pre-PH-graph} (resp. \emph{embedded pre-PH-graph}) if $D$ has a graph structure so that every edge is sent homeomorphically to an H-arc. It is a \emph{PH-graph} (resp. \emph{embedded PH-graph}) if moreover the collection of edges incident to a vertex belong to a pair of transverse H-circles.
	
	Similarly, we define:
	\begin{description}
		\item[PH-path] a PH-graph with $D=[0,1]$.
		\item[PH-arc] an embedded PH-graph with $D=[0,1]$.
		\item[PH-circuit] a PH-graph with $D=\Sone$.
		\item[PH-curve] an embedded PH-graph with $D=\Sone$.
	\end{description}
\end{definition}

\editA{

The next lemma will help us upgrade any pre-PH-graph into a PH-graph by a small change that does not ruin the complementary components too much.

\begin{lemma}\label{lem: pre PH to PH}
	If $X$ is an embedded PH-graph, and $X\subseteq X'$ is an embedded pre-PH-graph which contains $X$, and $U_i$ are neighborhoods of the vertices of $X'$, then there is an embedded PH-graph $X''$ such that
	\begin{enumerate}
	    \item $X\subseteq X''$
	    \item $X''- \bigcup U_i = X' - \bigcup U_i$, and
	    \item the components of $\Xi-X''$ which are not contained in $\bigcup U_i$, are contained in components of $\Xi-X'$. 
	\end{enumerate}
\end{lemma}

\begin{proof}
	If $X'$ is an embedded pre-PH-graph, let $v_i\in U_i$ be the vertices of $X'$. If the H-arcs $\alpha_{i,1},\ldots,\alpha_{i,\deg(v)}$ of $X'$ incident to $v_i$ are not transverse, let $B_i$ be a small H-neighbourhood of $v_i$ which is contained in $U_i$ and does not intersect the H-arcs of $X'$ except for $\alpha_{i,1},\ldots,\alpha_{i,\deg(v)}$, and let $\beta_i$ be its bounding H-circle. Note that $\beta_i$ intersects $\alpha_{i,1},\ldots,\alpha_{i,\deg(v)}$ transversely.
	Replace $X'$ by the graph $X''$ defined by \[X'' = ( X' - \bigcup_i B_i )\cup \bigcup_i \beta_i \cup X\] where $i$ ranges over the index set of the vertices of $X'$. 

	Clearly the components of $\Xi - X''$ which are not contained in $\bigcup B_i$, must be contained in a component of $\Xi-X'$ as any path in $\Xi-X''$ which is not a path in $\Xi-X'$ must pass through some $B_i$, and hence must cross some $\beta_i$.
\end{proof}

The next lemma similarly shows that one can turn pre-PH-arcs into PH-arcs.

\begin{lemma}\label{lem: pre-PH-arc to PH-arc}
    If $X'$ is an embedded pre-PH-arc, and $U_i$ are neighborhoods of the interior vertices of $X'$, then there exists an embedded PH-arc $X''$ such that $X''- \bigcup U_i = X' - \bigcup U_i$. 
\end{lemma}
\begin{proof}
    Let $v_i$, $\alpha_{i,1},\alpha_{i,2}$ and $\beta_i$ be as in the previous proof. Set \[X'' = ( \gamma - \bigcup_i B_i )\cup \bigcup_i \beta'_i\] where $i$ ranges over the interior vertices of the arc $X'$, and $\beta'_i$ is an H-sub-arc of $\beta_i$ connecting the intersection points of $\beta\cap\alpha_{i,1}$ and $\beta\cap\alpha_{i,2}$.
\end{proof}
}

\begin{lemma} \label{lem: paths to PH-arcs}
	Let $p$ be an embedded arc in $\Xi$, there is a sequence of embedded PH-arcs that limits to $p$ in the compact-open topology (up to reparametrization). If moreover the endpoints of $p$ are on some H-arcs $\alpha,\alpha'$ then the endpoints of the PH-arcs can be chosen to be transverse intersections of the first and last H-arcs with $\alpha$ and $\alpha'$ respectively.
\end{lemma}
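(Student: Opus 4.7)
The plan is to approximate $p$ by concatenating short H-sub-arcs cut out from a fine chain of H-neighborhoods along $p$, and then straighten near the vertices via Lemma~\ref{lem: pre PH to PH}. Fix $\epsilon>0$. By uniform continuity of $p$, partition $[0,1]$ as $0=t_0<t_1<\cdots<t_n=1$ so that each $p([t_{i-1},t_{i+1}])$ has diameter less than some $\delta\ll\epsilon$. Since H-neighborhoods form a basis, for each $1\le i\le n-1$ I choose an H-neighborhood $N_i$ of diameter less than $\delta$ containing $p([t_{i-1},t_{i+1}])$, with bounding H-circle $\alpha_i$. For the endpoint requirement, set $\alpha_0=\alpha$ and $\alpha_n=\alpha'$.

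Consecutive neighborhoods $N_i,N_{i+1}$ overlap (both contain $p([t_i,t_{i+1}])$) but neither contains the other, since $p(t_{i-1})\in N_i\setminus N_{i+1}$ and symmetrically. This forces $\alpha_i$ and $\alpha_{i+1}$ to meet; the cyclic intersection hypothesis then yields $|\alpha_i\cap\alpha_{i+1}|=2$, and the fact that points of $p$ lie on both sides of each circle gives transversality via Lemma~\ref{lem: two state}. I choose $v_i\in\alpha_i\cap\alpha_{i+1}$ to be the intersection point lying in $\overline{N_i}\cap\overline{N_{i+1}}$ (the ``near'' intersection), and let $\beta_i$ be the sub-H-arc of $\alpha_i$ from $v_{i-1}$ to $v_i$ on the side of $\alpha_i$ close to $p$. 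Lemma~\ref{lem: 4 components} identifies $\beta_i$ as the unique sub-arc whose closure meets both neighborhoods $N_{i-1}$ and $N_{i+1}$, and with sufficient shrinking this sub-arc is contained in $\overline{N_{i-1}}\cup\overline{N_{i+1}}$.

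The concatenation $\gamma=\beta_1\cup\cdots\cup\beta_{n-1}$ is then an embedded pre-PH-arc lying within Hausdorff distance $O(\delta)$ of $p([0,1])$. Applying Lemma~\ref{lem: pre PH to PH} with vertex neighborhoods $U_i\subseteq N_{i-1}\cup N_i\cup N_{i+1}$ of diameter less than $\epsilon$ produces an embedded PH-arc $\gamma'$ agreeing with $\gamma$ outside $\bigcup U_i$, hence still within $\epsilon$ of $p$. By construction its endpoints $v_0\in\alpha\cap\alpha_1$ and $v_{n-1}\in\alpha'\cap\alpha_{n-1}$ are transverse intersections as required. Sending $\delta,\epsilon\to 0$ gives a sequence of embedded PH-arcs converging to $p$ in the compact-open topology after suitable reparametrization.

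The main obstacle is verifying that $\gamma$ is genuinely \emph{embedded}, i.e.\ that $\beta_i\cap\beta_j=\emptyset$ for $|i-j|\ge 2$. Since $\alpha_i$ extends well outside $N_i$, I must rule out accidental intersections of its near-side sub-arc with distant circles. The component analysis of Lemmas~\ref{lem: 4 components} and \ref{lem: 8 components} is the right tool: by iteratively shrinking the $N_i$ so that $N_j$ for $|j-i|\ge 2$ is disjoint from the near-side of $\alpha_i$, one can identify the correct sub-arc of $\alpha_i$ living in a controlled neighborhood of $p$, and then the embeddedness of $\gamma$ (and of $\gamma'$, whose modifications occur only in disjoint vertex neighborhoods) follows. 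A secondary technicality is arranging transversality of $\alpha_1,\alpha_{n-1}$ with the fixed H-arcs $\alpha,\alpha'$, which can be achieved by slightly shrinking the initial and final neighborhoods using the basis property of H-neighborhoods.
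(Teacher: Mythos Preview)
Your approach is essentially the same as the paper's: cover $p$ by a chain of small H-neighborhoods, extract a pre-PH-arc from the union of their bounding H-circles, and then invoke Lemma~\ref{lem: pre PH to PH}. The paper's proof is considerably terser and avoids the technical point you single out at the end. Rather than committing to specific intersection points $v_i$ and sub-arcs $\beta_i$ and then worrying about whether $\beta_i\cap\beta_j=\emptyset$ for $|i-j|\ge 2$, the paper simply observes that the union $\bigcup_i\beta_i$ of the bounding H-circles is a connected finite graph, and any two points of a connected finite graph are joined by an \emph{embedded} path; this path is automatically a pre-PH-arc. Since every $\beta_i$ lies in $\overline{B_i}$ and hence has diameter~$<\delta$, the whole union already sits in a $\delta$-neighborhood of $p([0,1])$, so any embedded path inside it is close to $p$. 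This sidesteps your iterative shrinking argument entirely.

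Your more explicit construction is not wrong, but the embeddedness verification you flag is a genuine nuisance with your choices (when $p$ nearly self-intersects, far-apart $\alpha_i,\alpha_j$ may meet), and your proposed fix via ``iterative shrinking'' is vague. The paper's abstraction---work in the graph $\bigcup_i\beta_i$ and pick any embedded path---is the cleaner way through. Your treatment of the endpoint transversality with $\alpha,\alpha'$ matches the paper's (adjust the first and last H-neighborhoods of the cover).
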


\begin{proof}
	Cover $p$ with small H-neighborhoods $B_i$ with bounding H-circles $\beta_i$, and without loss of generality for all $i\ne j$, $B_i$ does not contain any $B_j$. The set $\bigcup_i\beta_i$ is connected, and contains pre-PH-arcs $\gamma$ which are close to $p$ in the compact-open topology. Moreover, the endpoints of these can be chosen to be transverse to the H-arcs $\alpha,\alpha'$ by changing the first and last sets of the cover $B_i$. Using \editA{Lemma \ref{lem: pre-PH-arc to PH-arc}} we find a close embedded PH-arc $\gamma'$ which has the same properties. 
\end{proof}

\subsection{The space of sides}
%

\begin{definition}
	Let $Z\subset Y$ be a pair of topological spaces. 
	For a point $y\in Y$ let $\nbd{y}$ be the collection of open sets containing $y$.
	A \emph{side} of $y$ is a choice $C_U$ for every $U\in \nbd{y}$ of a connected component of $U- Z$ such that $C_U \subset C_V$ if $U\subset V$.
	In other words, a side of $y$ is a germ of connected components of $y$ in $Y- Z$.
	Note that if $y\in Z^\circ$ then it has no side.
	A \emph{sided point} is a pair  $\sided{y}:=(y,\{C^{\sided{y}}_U\}_{U\in \nbd{y}})$ of a point $y$ and a side. We will denote a sided point by $\sided{y}$.
	Let $Y\sidesof Z$ be the set of sided points.
	We endow $Y\sidesof Z$ with the topology given by the local basis $V^{\sided{y}} _U$ is the set of all $\sided{y}'$ such that $y'\in U$ and $C^{\sided{y}}_U = C^{\sided{y'}}_U$.
	\editA{We denote by $F:Y\sidesof Z \to Y$ the \emph{forgetful map} mapping $\sided{y} \mapsto y$.}
\end{definition}

Here is an example that one should keep in mind.

\begin{example}\label{ex: surface with graph}
	Let $\Omega$ be a closed surface, and let $O$ be a graph embedded in $\Omega$. Then $\Omega \sidesof O$ is the compact surface with boundary obtained by cutting $\Omega$ along $O$.
\end{example}

\begin{remark}
	Note the following:
	\begin{enumerate}
		\item 	The forgetful map $F:Y\sidesof Z \to Y$ is continuous. 
		In Example \ref{ex: surface with graph}, the map $F$ is the map that glues $\Omega$ back from $\Omega \sidesof O$.
		
		\item If $Y$ is locally connected and $Z$ is closed then any connected component of $Y- Z$ embeds topologically in $Y\sidesof Z$, and connected components of $Y\sidesof Z$ correspond to connected components of $Y-  Z$. For a connected component $C$ in $Y- Z$ we will denote by $\tilde{C}$ its corresponding component in $Y\sidesof Z$. Explicitly, $\tilde{C}$ is the set of $\sided{y}$ such that $C^{\sided{y}}_U \subset C$ for all $U\in\nbd{y}$.
		We denote by $\partial \tilde{C}$ the set $\tilde{C} - C$ (where $C$ is viewed as a subspace of $Y\sidesof Z$).
		We denote by $\partial C$ the map $F$ restricted to $\partial \tilde{C}$.
		In Example \ref{ex: surface with graph}, $\tilde{C}$ are the 'regions' of the complement of the graph on the surface, and $\partial C$ are their parametrized boundary curves (note that the boundary curves are not necessarily embedded curves).

		\item Let $\alpha$ be an H-arc (or an H-circle) in $\Xi$, then Lemma \ref{lem: two components} implies that $F:\Xi- \alpha \to \Xi$ is 2-to-1 on $\alpha^{\circ}$ (where $\alpha^{\circ}$ denotes the interior of $\alpha$ as an arc) and 1-to-1 on $\Xi - \alpha^{\circ}$. More precisely, there are two arcs (or curves) $\sided{\alpha},\sidedop{\alpha}$ in $\Xi \sidesof \alpha$ whose interiors map homeomorphically to  the interior of $\alpha$ under $F$.
	\end{enumerate}
\end{remark}

The following is an easy corollary of Lemma \ref{lem: 4 components}
 
\begin{lemma}\label{lem: 2 sides}
	Let $\alpha$ be an H-circle in $\Xi$, and let $a\in \alpha$, then $a$ has two sides, $\sided{a},\sidedop{a}$. In fact, $F^{-1} \alpha \simeq \sided{\alpha} \sqcup \sidedop{\alpha}$ and each of $\sided{\alpha},\sidedop{\alpha}$ is an embedded circle of $\Xi\sidesof \alpha$.
\end{lemma}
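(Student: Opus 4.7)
The plan is to work locally at a point $a \in \alpha$, using a small H-neighborhood of $a$ whose bounding H-circle is transverse to $\alpha$, so that Lemma \ref{lem: 4 components} describes the complement; then to verify that the resulting topology on $\Xi \sidesof \alpha$ restricts to two parametrized copies of the circle $\alpha$.

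First I would fix $a \in \alpha$ and let $A, A^c$ denote the two components of $\Xi - \alpha$ from Lemma \ref{lem: two components}. Since H-neighborhoods form a basis of the topology, I would choose an arbitrarily small H-neighborhood $B \ni a$ with bounding H-circle $\beta$, small enough that $\alpha \not\subseteq \cl{B}$; then $\alpha$ meets both $B$ and $B^c$, so $\beta$ must cross $\alpha$ and the standing assumption gives $|\alpha \cap \beta| = 2$. The key observation is that $\alpha$ and $\beta$ must be transverse: otherwise Lemma \ref{lem: two state} would force both arcs of $\alpha - \beta$ into the component of $\Xi - \beta$ containing $a$, namely $B$, which would give $\alpha \subseteq \cl{B}$ and contradict the choice of $B$. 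Once transversality is established, Lemma \ref{lem: 4 components} decomposes $B - \alpha$ as $(A \cap B) \sqcup (A^c \cap B)$, each connected.

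Next I would deduce the two sides: any open $U \ni a$ contains such a small transverse $B$, and the components $A \cap B$ and $A^c \cap B$ sit inside $U \cap A$ and $U \cap A^c$ respectively, so the nested-component condition in the definition of a side determines two coherent germs $\sided a$ (with $C_U^{\sided a} \subseteq A$) and $\sidedop a$ (with $C_U^{\sidedop a} \subseteq A^c$). These are the only sides since every such $B$ contributes exactly these two components. This yields the set-theoretic decomposition $F^{-1}(\alpha) = \sided\alpha \sqcup \sidedop\alpha$ for $\sided\alpha := \{\sided a : a \in \alpha\}$ and $\sidedop\alpha := \{\sidedop a : a \in \alpha\}$.

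For the embedded-circle statement I would unpack the topology on $\Xi \sidesof \alpha$: for $\sided a \in \sided\alpha$ and a small transverse H-neighborhood $B \ni a$, the basic open set $V_B^{\sided a} \cap \sided\alpha$ consists of those $\sided{a'}$ with $a' \in \alpha \cap B$ whose selected component in $B - \alpha$ is $A \cap B$, which is just $\{\sided{a'} : a' \in \alpha \cap B\}$. Thus $F|_{\sided\alpha}$ is a bijection onto $\alpha$ sending basic neighborhoods homeomorphically onto $\alpha \cap B$, so $F|_{\sided\alpha}$ is a local homeomorphism, and a bijective local homeomorphism is a homeomorphism. Hence $\sided\alpha$ is an embedded circle in $\Xi \sidesof \alpha$, and the same argument works for $\sidedop\alpha$. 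I expect the main obstacle to be the transversality step, which crucially uses that $\alpha$ has positive diameter and so cannot fit inside arbitrarily small H-neighborhoods; once that is in hand, the rest is just bookkeeping with the definitions.
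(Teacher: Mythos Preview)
Your proposal is correct and follows essentially the same line as the paper's proof: use the local basis of H-neighborhoods, apply Lemma~\ref{lem: 4 components} to see $B - \alpha = (A\cap B)\sqcup(A^c\cap B)$, and read off the two sides. You are in fact more careful than the paper on one point: you explicitly verify that the bounding H-circle $\beta$ of a small $B$ is \emph{transverse} to $\alpha$ (via Lemma~\ref{lem: two state} and the fact that $\alpha \not\subseteq \cl{B}$), which is the hypothesis Lemma~\ref{lem: 4 components} actually needs and which the paper leaves implicit; you also spell out the local-homeomorphism check for $F|_{\sided\alpha}$ that the paper dismisses as ``easy to see.''
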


\begin{proof}
	It is enough to define the side using a local basis of $a$. Let $\nbd{a}^H$ be the local basis of H-neighborhoods of $a$, then by lemma \ref{lem: 4 components} if $B\in \nbd{a}^H$ then $B-\alpha$ has two components, namely $B\cap A$ and $B\cap A^c$ where $A,A^c$ are the H-neighborhoods of $\alpha$. The choice of $U^{\sided{a}}_B = B\cap A$ and $U^{\sidedop{a}}_B = B\cap A^c$ are thus the only two sides of $a$.
	
	Similarly it is easy to see that $F^{-1} \alpha \simeq \sided{\alpha} \cup \sidedop{\alpha}$, where $\sided{\alpha} = \{\sided{a}|a\in\alpha\}$ and $\sidedop{\alpha} = \{\sidedop{a} | a\in \alpha\}$ and $\sided{a},\sidedop{a}$ are defined above.
\end{proof}

Similarly one proves the following using Lemma \ref{lem: 8 components}

\begin{lemma}\label{lem: 4 sides or less}
	Let $\alpha,\beta$ be transverse H-circles in $\Xi$ and let $c,c'\in \alpha\cap \beta$ be their 2 intersection points, and let $\alpha_i,\beta_i$, $i=1,2$, be the two subsegments of $\alpha,\beta$ oriented from $c$ to $c'$. Then
	\begin{enumerate}
		\item The point $c$ (and $c'$) has 4 sides.
		\item The sides of $\alpha\cup\beta$ form 4 circles as follows
		\[F^{-1}(\alpha\cup\beta) \simeq \sided{\alpha}_1\sided{\beta}_1^{-1} \sqcup \sidedop{\beta}_1\sided{\alpha}_2^{-1} \sqcup \sidedop{\alpha}_2\sidedop{\beta}_2^{-1} \sqcup \sided{\beta}_2\sidedop{\alpha}_1 ^{-1}\]
		where $\sided{\alpha_i},\sidedop{\alpha_i},\sided{\beta_j},\sidedop{\beta_j}$ are defined as above using some H-neighborhoods $A,B$ bounded by $\alpha,\beta$ respectively. Moreover, these cycles of the boundaries $\partial C$ of the connected components $C\subseteq \Xi - (\alpha\cup\beta)$ viewed in $\Xi\sidesof(\alpha\cup\beta)$.
		\item \label{lem: degree of vertex sides} Similarly if we replace $\alpha\cup\beta$ by some union $Z$ of $k$ of the 4 H-arcs $\alpha_i,\beta_i$, $i=1,2$, then $c$ has $k$ sides and $F^{-1}(Z)\subset \Xi - Z$ is the obvious disjoint union of circles.
	\end{enumerate}\qed
\end{lemma}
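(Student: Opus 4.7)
The plan is to reduce to Lemma \ref{lem: 8 components} by introducing a third H-circle bounding a small H-neighborhood of $c$. Pick an H-neighborhood $N$ of $c$ with bounding H-circle $\gamma$ so that $c'\notin\overline{N}$. Since each sub-arc $\alpha_i$ (resp.\ $\beta_j$) joins $c\in N$ to $c'\notin\overline{N}$ it must cross $\gamma$; as any two distinct H-circles meet in exactly two points, $\gamma$ hits each $\alpha_i$ (resp.\ $\beta_j$) in a single point, and is transverse to both $\alpha$ and $\beta$ by Lemma \ref{lem: two components}, since $\gamma$ separates $c$ from $c'$. To verify the remaining hypothesis of Lemma \ref{lem: 8 components} that $\alpha$ separates $\gamma\cap\beta$, note those two points lie one on each $\beta_j$, and by Lemma \ref{lem: two state} transversality of $\alpha,\beta$ forces $\beta_1,\beta_2$ to lie in different components of $\Xi-\alpha$; the symmetric statement for $\gamma\cap\alpha$ is identical, and $\gamma$ separates $\{c,c'\}=\alpha\cap\beta$ by construction.

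Lemma \ref{lem: 8 components} then decomposes $\Xi-(\alpha\cup\beta\cup\gamma)$ into 8 components, exactly 4 of which lie in $N$, and these yield 4 distinct local components of $\Xi-(\alpha\cup\beta)$ around $c$. Shrinking $N$ and re-running the construction gives compatible refinements, so these 4 germs are well-defined sides; since each must sit inside one of the 4 global components of $\Xi-(\alpha\cup\beta)$ provided by Lemma \ref{lem: 4 components}, there are at most 4 sides, hence exactly 4, proving (1). For (2), each global component $C$ of $\Xi-(\alpha\cup\beta)$ has $\partial C=\alpha_i\cup\beta_j$ by Lemma \ref{lem: 4 components}, and in $\Xi\sidesof(\alpha\cup\beta)$ the lifted component $\tilde C$ carries a boundary circle obtained by concatenating one side of $\alpha_i$ with one side of $\beta_j$ at each of $c,c'$ — using (1), the 4 local sides at each vertex are exactly the 4 sectors between consecutive arcs. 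Orienting $\alpha_i,\beta_j$ from $c$ to $c'$ and labeling sides compatibly with the H-neighborhoods $A,B$ produces the four cycles in the stated form, and their disjoint union exhausts $F^{-1}(\alpha\cup\beta)$.

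For (3), the same local argument applied with $Z$ equal to any union of $k$ of the four arcs shows that at $c$ (and at $c'$) those $k$ arcs cut a small H-neighborhood into exactly $k$ sectors, so each endpoint has $k$ sides. The graph $F^{-1}(Z)$ in $\Xi\sidesof Z$ thus has $2k$ edges (one per side of each arc) and $2k$ vertices (the $k$ vertex-sides at $c$ plus those at $c'$), every vertex having degree $2$, so it is a disjoint union of circles. The main obstacle throughout is verifying in the first paragraph that $\gamma$ satisfies the pairwise-transversality and mutual-separation hypotheses of Lemma \ref{lem: 8 components}; once that is secured, the remainder is combinatorial bookkeeping about which side of each arc appears in which boundary cycle.
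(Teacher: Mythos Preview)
Your proof is correct and is exactly the argument the paper intends: the paper offers no explicit proof beyond the line ``Similarly one proves the following using Lemma \ref{lem: 8 components}'' and a \qed, and you have carried out precisely that analogy with Lemma \ref{lem: 2 sides}, introducing the small H-circle $\gamma$ around $c$ and carefully verifying the mutual-separation hypotheses of Lemma \ref{lem: 8 components}. The only minor comment is that your edge/vertex count in part (3) (``$2k$ edges, $2k$ vertices, all degree $2$'') tacitly assumes the union $Z$ is connected through both $c$ and $c'$, but the conclusion that $F^{-1}(Z)$ is a disjoint union of circles follows regardless since every point of $F^{-1}(Z)$ is locally an arc.
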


It is useful to note that if $c$ is a point on a PH-graph, then a neighborhood of $c$ fits into \ref{lem: degree of vertex sides} of Lemma \ref{lem: 4 sides or less}.
Let $c$ be on a PH-graph $X$, and let $C$ be a small H-neighborhood of $c$ that meets $X$ only at $c$ and the edges adjacent to $c$, then the $\deg(c)$ components of $C- X$ will be called the \emph{sided H-neighborhoods of $c$}.

On the set of edges incident to a vertex, the adjacency of edges in boundary cycles, defines a natural unoriented cyclic order. We will refer to unoriented cyclic order as \emph{a circular order}.
We summarize Lemma \ref{lem: 4 sides or less} as follows. 

\begin{lemma}\label{lem: sides of PH-graphs}
	If $X$ is a PH-graph, for each connected component $C$ of $\Xi- X$,  $\partial C\subseteq \Xi\sidesof X$ is a union of circles. The map $F:\tilde{X} = F^{-1}(X)\to X$ defines a local circular order on the edges around each vertex. \qed
\end{lemma}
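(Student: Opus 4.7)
The plan is to derive the lemma by unpacking Lemma \ref{lem: 4 sides or less} locally at each point of $X$ and then assembling the local pictures into a compact $1$-manifold. Concretely, I would first describe the sides of $\Xi\sidesof X$ above each $x\in X$. If $x$ lies in the interior of an edge, Lemma \ref{lem: 2 sides} provides exactly two sides and the preimage under $F$ of a small sub-arc through $x$ is a pair of disjoint arcs. If $x=v$ is a vertex of degree $k\in\{2,3,4\}$, then by the definition of a PH-graph the $k$ incident edges are sub-arcs of a pair of transverse H-circles meeting at $v$, so part (\ref{lem: degree of vertex sides}) of Lemma \ref{lem: 4 sides or less} applies to the union $Z$ of the $k$ truncated incident H-arcs near $v$: the point $v$ has exactly $k$ sides and $F^{-1}(Z)$ near $v$ is a disjoint union of $k$ arcs, each passing through one sided point over $v$ and joining two consecutive half-edges at $v$.

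With these local models in hand, I would conclude that $\tilde X = F^{-1}(X)$ is a compact $1$-manifold. Each sided point over an edge interior has an arc neighborhood in $\tilde X$, and each of the $k$ sided points over a degree-$k$ vertex has an arc neighborhood as well, consisting of the two half-edges bounding the corresponding sided H-neighborhood of $v$ glued at the sided point. Since $X$ is a finite topological graph and $F$ has finite fibers, $\tilde X$ is compact and is therefore a finite disjoint union of circles.

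For a connected component $C$ of $\Xi-X$, the second item of the remark following Example \ref{ex: surface with graph} identifies $C$ with its lift in $\Xi\sidesof X$ and gives $\partial\tilde C=\tilde C-C\subseteq\tilde X$. A sided point lies in $\partial\tilde C$ iff its germ of components in $\Xi-X$ is contained in $C$, and this condition is constant on each connected component of $\tilde X$; hence $\partial\tilde C$ is a union of components of $\tilde X$, in particular a finite disjoint union of circles.

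Finally, to extract the local circular order at a vertex $v$ of degree $k$, I would use that each of the $k$ sided H-neighborhoods of $v$ has exactly two of the incident half-edges on its boundary, namely the two bounding that sector in the transverse H-circle picture of Lemma \ref{lem: 4 sides or less}. The induced traversal of sided neighborhoods around $v$ therefore pairs consecutive half-edges into a single cyclic sequence $e_1,e_2,\dots,e_k,e_1$, and since there is no preferred direction of traversal this is an unoriented cyclic (circular) order on the edges at $v$. The one nontrivial point in the whole argument, establishing the local $1$-manifold structure at a vertex, is precisely what part (\ref{lem: degree of vertex sides}) of Lemma \ref{lem: 4 sides or less} supplies, so no further work is needed.
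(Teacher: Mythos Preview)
Your proposal is correct and follows exactly the route the paper intends: the paper states this lemma with a \qed\ immediately after announcing it as a summary of Lemma~\ref{lem: 4 sides or less}, and your argument is precisely the unpacking of that summary---local analysis via Lemmas~\ref{lem: 2 sides} and~\ref{lem: 4 sides or less}(\ref{lem: degree of vertex sides}), assembly of $\tilde X$ as a compact $1$-manifold, and reading off the circular order from the sided H-neighborhoods. One small remark: you restrict to vertex degrees $k\in\{2,3,4\}$, but PH-graphs allow $k=1$ as well (e.g.\ endpoints of PH-arcs); the same local analysis from Lemma~\ref{lem: 4 sides or less}(\ref{lem: degree of vertex sides}) with $k=1$ shows such a point has a single side at which the two sides of the incident edge meet, so $\tilde X$ remains a $1$-manifold there and nothing changes.
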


We will call connected components of $\Xi - X$ the cells of $(X,\Xi)$. In the next subsection, we will introduce the right framework to describe this structure.


\subsection{Organisms and animals}
%

\begin{definition}
	A \emph{fat graph} is a graph $X$ and a choice of circuits in $X$, \editA{called \emph{boundary circuits},} $\partial X:\coprod\Sone\to X$, such that the gluing of a disk along each circuit gives a closed surface.
	Equivalently, each edge has two preimages under this map, and around every vertex the adjacency of edge in the circuits defines a circular order.
	Each lift of an edge to $\partial X$ is called a sided edge.
	
	A fat graph is \emph{orientable} if the closed surface is orientable, or equivalently if one can orient the circuits such that they now define a cyclic order around vertices.
	
	Let $X_1, X_2$ be fat graphs, we say that $X_1\subseteq X_2$ \emph{as fat graphs} if $X_1\subseteq X_2$ as graphs and the circular ordering at the vertices of $X_1$ is the same as that induced from $X_2$.
\end{definition}
    \editA{We note that if $X_1\subseteq X_2$ as fat graphs then each sided edge of $X_1$ corresponds to a unique sided edge of $X_2$.}
\begin{definition}
	An \emph{\organism} is a pair  $\SL{X} = (X,\Cells{C})$ where $X$ is a connected fat graph, and $\Cells{C}$ is a partition of the circuits of $\partial X$. The elements of $\Cells{C}$ are called \emph{cells}, the set of boundary circuits of a cell $C\in \Cells{C}$ is denoted by $\partial C$.
	
	For organisms $\SL{X}_1,\SL{X}_2$ we say that $\SL{X}_1\subseteq \SL{X}_2$ \emph{as organisms} if $X_1\subseteq X_2$ as fat graphs and two sided edges of $\SL{X}_1$ belong to the same cell if they are equivalent with respect to the equivalence relation$\sim$ on sided edges of $X_2$ generated by $e \sim e'$ if $e,e'$ belong to the same cell in $\SL{X}_2$ or they are the two sides of an edge of $X_2 - X_1$. Two sided edges of $X_1$ are in the same cell of $\SL{X}_1$ if the are $\sim$-equivalent.
	
\end{definition}

\begin{definition}
	Let $\Delta$ be a topological space, and let $D$ be an embedded graph in $\Delta$. The pair $(D,\Delta)$ is an \emph{animal} if: 
	\begin{enumerate}
		\item $\Delta$ is connected, locally connected, compact and metrizable.
		\item The boundaries $\partial \tilde C$ of the components in $\Delta\sidesof D$ are embedded curves, and the maps $\partial C$ endow $\Delta$ with an organism structure. 
	\end{enumerate}
	We will abuse terminology and refer to the components of $\Delta \sidesof D$, the components of $\Delta - D$, and the boundaries $\partial \tilde{C}$ as the cells of the animal.
\end{definition}

\begin{example}
	\begin{itemize}
		\item \emph{Surface animals}. The main example of an animal is an embedded graph in a closed surface. 
		Observe that any organism can be seen as induced from some surface animal by gluing connected surfaces with boundaries along the boundary circuits of each cell.
		\item \emph{Tame animals}. If in a surface animal the components $\tilde{C}$ are holed spheres we will call such an animal a \emph{tame animal}.
		\item  \emph{Wild animals}. The second example which will be important for us is an embedded PH-graph in $\Xi$, as Lemma \ref{lem: sides of PH-graphs} shows. We will call $(X,\Xi)$ a \emph{wild animal}.
	\end{itemize}
\end{example}

\begin{lemma}
	If $X_1 \subseteq X_2$ are embedded PH-graphs in $\Xi$, then $\SL{X}_1\subseteq \SL{X}_2$ as organisms.
\end{lemma}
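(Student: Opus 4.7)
The plan is to verify the two defining conditions for a sub-organism: fat graph inclusion and cell compatibility. I begin by passing to a common refinement, adding to $X_1$ all vertices of $X_2$ that lie on $X_1$ (which subdivides certain $X_1$-edges at degree-two vertices), making $X_1$ a literal subgraph of $X_2$ and giving a canonical bijection on sided edges. For the fat graph condition, at each vertex $v$ of $X_1$ the incident $X_1$-edges lie on at most two transverse H-circles, and Lemma~\ref{lem: 4 sides or less}(\ref{lem: degree of vertex sides}) identifies the circular order at $v$ with the cyclic arrangement of sides of a small H-neighborhood relative to $X_1$. Extra edges of $X_2 - X_1$ at $v$ only refine this arrangement, so the $X_1$-cyclic order at $v$ is the restriction of the $X_2$-cyclic order.

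Next, let $\sim$ denote the equivalence on sided edges of $X_2$ defined in the statement. For the ``if'' direction, I check that the two generators give same-cell-in-$\SL{X}_1$: (i) if two sided edges of $X_2$ bound the same component $D$ of $\Xi - X_2$, then $D \subseteq \Xi - X_1$ lies in a single component of $\Xi - X_1$, which both corresponding $X_1$-sided-edges bound; (ii) if $\sided{e}, \sided{e}'$ are the two sides of an edge $e \in X_2 - X_1$, then a small transverse arc crossing $e$ at an interior point joins the two adjacent $X_2$-cells, and since $e$ is disjoint from $X_1$ this arc lies in $\Xi - X_1$, placing both cells in the same $X_1$-cell. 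Transitive closure handles arbitrary $\sim$-chains.

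For the ``only if'' direction, suppose sided edges $\sided{f}, \sided{f}'$ of $X_1$ bound the same component $C_1$ of $\Xi - X_1$, and let $D, D' \subseteq C_1$ be the $\Xi - X_2$-components whose boundaries contain their $X_2$-counterparts. Since $C_1$ is open, connected, and disjoint from $X_1$, the set $C_1 \cap X_2$ lies entirely in $X_2 - X_1$ and consists of portions of finitely many $X_2$-edges together with finitely many $X_2$-vertices. I choose a path in $C_1$ from $D$ to $D'$ and perturb it to avoid the $X_2$-vertices and meet each crossed edge transversely at an interior point; this yields a chain $D = D_0, \ldots, D_k = D'$ of $\Xi - X_2$-components in $C_1$ whose consecutive pairs share an edge of $X_2 - X_1$. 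Interleaving rule (i) inside each $D_i$ with rule (ii) across the shared edges produces the desired $\sim$-equivalence between $\sided{f}$ and $\sided{f}'$.

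The main technical step is the transversality adjustment in the last paragraph: I must ensure the path can be made, inside $C_1$, to meet $X_2$ transversely at finitely many interior points of edges of $X_2 - X_1$, one at a time. Finiteness of the vertex set of $X_2$, openness of $C_1$, and the local H-neighborhood structure from Lemmas~\ref{lem: 4 components} and~\ref{lem: 8 components} reduce this to a standard perturbation argument; the remainder of the proof is bookkeeping directly against the organism definitions.
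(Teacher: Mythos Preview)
Your proof is correct and follows the same two-part skeleton as the paper's (fat graph inclusion from the locally determined circular orders, then cell compatibility from connected components of $\Xi - X_i$); the paper's version is a two-sentence sketch, and you have filled in both directions of the cell condition carefully.

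One remark on the final paragraph: the transversality/perturbation step is plausible given the local H-neighborhood description, but it is not needed. A cleaner route is an open--closed argument inside $C_1$. Let $\mathcal{A}$ be the $\sim$-equivalence class of $X_2$-cells containing $D$, and set $U = \bigcup_{E \in \mathcal{A}} \cl{E} \cap C_1$. There are only finitely many $X_2$-cells (each sided edge lies on exactly one boundary circuit), so $U$ is closed in $C_1$. For openness: at a point of some $E \in \mathcal{A}$ this is clear; at an interior point of an edge $e \subset X_2 - X_1$ the two adjacent $X_2$-cells are $\sim$-equivalent by definition, so a small H-neighborhood lies in $U$; at a vertex $v \in X_2 \cap C_1$ every incident edge lies in $X_2 - X_1$ (since $v \notin X_1$), so all sides of $v$ are $\sim$-equivalent and again a small H-neighborhood lies in $U$. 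Hence $U = C_1$, giving the chain from $D$ to $D'$ without any general-position argument in $\Xi$.
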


\begin{proof}
	The fat graph structure of $\SL{X}_1$ embeds in $\SL{X}_2$ because the structure is determined locally by small H-circles around the vertices.
	The cell structure has to do with connected components of $\Xi - X_1$ and so, \editA{every cell of $X_2$ is in a cell of $X_1$.}
\end{proof}

\begin{definition} 
A PH-curve $A$ is \emph{2-sided} if $\tilde{A}\subset \Xi\sidesof A$ has two components. Otherwise the PH-curve is \emph{1-sided}.
We say that $\Xi$ is \emph{orientable} if it does not admit 1-sided PH-curves.
\end{definition}

For example, an H-circle is 2-sided.

An isomorphism of fat graphs is a homeomorphism of the graphs which respects the boundary.

\subsection{Embryogenesis}

\begin{quotation}
	``Embryology will often reveal to us the structure, in some degree obscured, of the prototype of each great class.''
	--- Charles Darwin
\end{quotation}

\begin{definition}
	An \emph{elementary extension} of a fat graph $X$ is a fat graph $X'$ which as a graph is obtained from $X$ by adding an edge $e$, and $X\subset X'$ as fat graphs. 
	Equivalently, all the boundary circuits of $X'$ except those who pass through $e$ are boundary circuits of $X$.
	
	The circuits of $X$ that are not circuits of $X'$ are the \emph{divided circuits} of the extension. \editA{The circuits that are in $X'$ but not in $X$ are the \emph{new circuits} of the extension.}
\end{definition}

%
%

Every fat graph can be obtained from a point by a sequence of elementary extensions.


\begin{definition} 
	Let $X'$ be an elementary extension of $X$, if $e$ has only one endpoint in $X$, then we say that the extension is of \emph{type 0}.
	Otherwise, the extension is of \emph{type 1} (resp. \emph{2}) if it has $1$ (resp. $2$) divided circuits. 
	We can further divide into cases as follows (see Figure \ref{fig:elem_ext}):
	\begin{itemize}
		\item If $X'$ is a type 0 extension of $X$, then $X'$ is uniquely determined by the edge $e$ and the divided circuit.
		
		\item There are 2 possible type 1 extensions of $X$ that share the same underlining graphs and divided circuits. These extensions can be distinguished by counting the number of circuits of $X'$ which are not circuits of $X$. This number can be $1$ or $2$, we call them type 1.1 and type 1.2.
		
		\item There are 2 possible type 2 extensions of $X$ that share the same underlining graphs and divided circuits. However, they always produce only one new circuit. We call them \emph{twin} extensions. We will see next that with an additional structure of a tame animal, one of the twins will be preferred.  
		
	\end{itemize}
\end{definition}

\begin{figure}[h]
	\centering{
		\resizebox{\linewidth}{!}{
			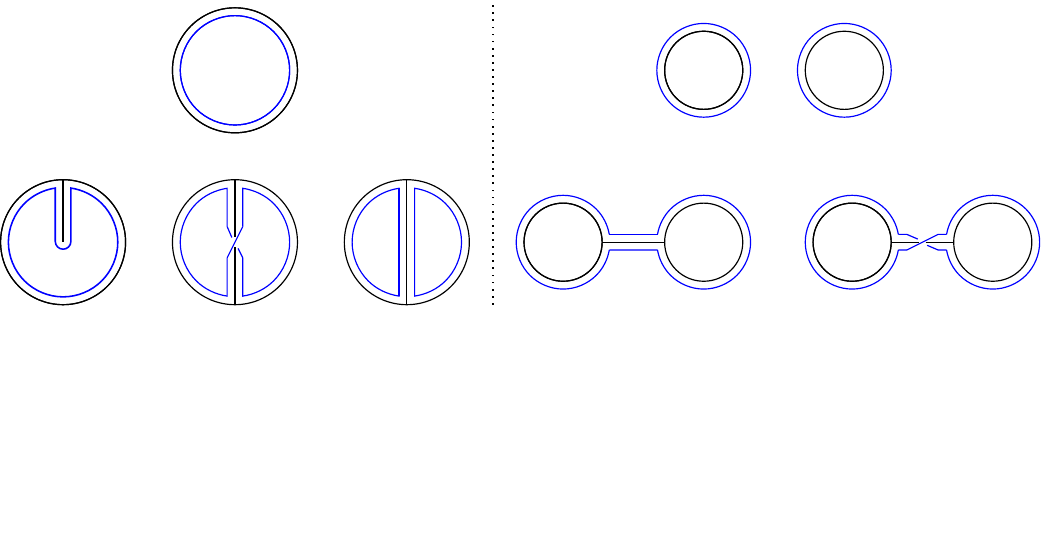
		}
		\caption{Elementary extensions. First row shows a part of $X$ (in black), and its changed boundary circuits (in blue). The second row shows the different elementary extensions after adding the added edge $e$, the new circuits are in blue. The last row shows the possible connected sum correction to the tame animal to accommodate a type 1 extension. The dotted line is the curve along which the connected sum is made, the red should be glued according to the orientation indicated by the arrow.}
		\label{fig:elem_ext}
	}
\end{figure}


%

\begin{definition}
	A \emph{\mitosis} in an \organism $\SL{X}$, is an organism $\SL{X}' \supseteq \SL{X}$ such that the underlining fat graph $X'$ of $\SL{X}'$ is an elementary extension of the underlining fat graph $X$ of $\SL{X}$\editA{, 
	such that the divided circuits of the elementary extension belong to the same cell $C$ of $\SL{X}$, all the other cells of $\SL{X}$ are cells of $\SL{X}'$, and the cell $C$ is replaced by at most two cells of $\SL{X}'$ each containing at least one of the new circuits.}
\end{definition}

\begin{remark}
	Note that only a mitosis corresponding to an elementary extension of type 1.2 might increase the number of cells. If this happens, the two new boundary circuits belong to different cells and the number of cells increases by 1.
\end{remark}

\begin{lemma}\label{lem: type 1 mitosis}
	Let $\SL{O}$ be the organism structure of a tame animal $(O,\Omega)$, and let $\SL{O}'$ be a mitosis of $\SL{O}$ of type 1. Then, $\SL{O}'$ is the organism structure of a tame animal $(O',\Omega')$ such that $\Omega'$ is a connected sum of $\Omega$ with either $\Ttwo$  or $\Stwo$ or $\RPtwo$ along a disk $D$ in a cell of $\Omega$ and $O\embedsin O'$ comes from $O\embedsin \Omega - D \embedsin \Omega'$.
\end{lemma}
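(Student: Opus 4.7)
The plan is to realize the new edge $e$ of the elementary extension through a connected-sum modification of $\Omega$ performed inside the cell containing the unique divided circuit. Write $v_1,v_2$ for the endpoints of $e$; in the type~1 case both lie on the divided circuit $c$, and $c$ is in the boundary of a single cell $C\in\Cells{C}$. Since $(O,\Omega)$ is a tame animal, the corresponding component $\tilde{C}$ is a holed sphere. I would choose an open disk $D\subset\tilde{C}$ disjoint from $O$, set $\Omega'=\Omega\# S$ at $D$ for a choice of $S\in\{\Stwo,\Ttwo,\RPtwo\}$ to be determined by the case, and then draw $e$ as an arc from $v_1$ to $v_2$ in $\Omega'$ with the cyclic-order insertions prescribed by the fat graph $X'$, routing through the connected summand whenever necessary. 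The image of $O$ remains in $\Omega - D$, so $O\embedsin O'$ factors through $\Omega - D\embedsin\Omega'$ as required.

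The case analysis splits according to the sub-type of the mitosis. \emph{Type 1.2 with two new cells.} Take $S=\Stwo$, so $\Omega'\cong\Omega$, and draw $e$ as a separating arc inside $\tilde{C}$; since $\tilde{C}$ is planar, the arc cuts it into two holed spheres, which realize the two new cells bounded by the two new circuits. \emph{Type 1.2 with a single new cell, or type 1.1.} In these cases the arc $e$ must be non-separating in the cell region, so genuine topological modification is required. Take $S=\Ttwo$ when the local extension of $X'$ admits an orientable fat-graph structure, and $S=\RPtwo$ otherwise, routing $e$ through the attached handle or crosscap. In the type 1.2 subcase the routing is chosen so that $e$ is non-separating and the two new circuits remain on a single cell; in the type 1.1 subcase the routing is chosen so that both sides of $e$ merge into a single boundary cycle after cutting. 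An Euler-characteristic count in each instance confirms that the resulting region $\tilde{C}\# S$ cut along $e$ is again a (single) holed sphere, preserving tameness.

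The main obstacle will be matching the combinatorial twin-choice of the extension (type 1.1 vs type 1.2, together with the cell assignment and orientability data) with the topological choice of $S$ and the homotopy class of the arc $e$ through the connected summand, so that the prescribed cyclic-order insertions at $v_1,v_2$ and the prescribed partition of new circuits into cells are both realized. A handle affords a single orientation-preserving non-separating routing while a crosscap provides a routing with a half-twist; keeping track of which option is needed reduces to checking local orientability of $X'$ at the new edge. Once this matching is identified, the proof is completed by a local comparison of cyclic orders at $v_1,v_2$ and the Euler-characteristic count verifying that every complementary region of $O'$ in $\Omega'$ is a holed sphere, so $(O',\Omega')$ is a tame animal whose induced organism is exactly $\SL{O}'$.
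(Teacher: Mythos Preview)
Your proposal is correct and follows the same approach as the paper. The paper's own proof is literally a one-line reference to Figure~\ref{fig:elem_ext} together with the observation that the new regions remain holed spheres; you have essentially written out in words what that figure depicts, including the Euler-characteristic verification that tameness is preserved. Your case correspondence (type~1.2 splitting the cell $\leftrightarrow \Stwo$; type~1.2 not splitting $\leftrightarrow \Ttwo$; type~1.1 $\leftrightarrow \RPtwo$) agrees exactly with the Remark that follows the lemma in the paper, though the paper states it directly in terms of the type rather than via a local-orientability criterion as you do---these are equivalent, since a type~1.1 extension is precisely the one that introduces a M\"obius band.
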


\begin{proof}
	See Figure \ref{fig:elem_ext}. Note that in all cases, the new regions remain holed spheres.
\end{proof}

\begin{remark}
	In the previous lemma, if the mitosis is of type 1.1 then $\Omega' = \Omega \# \RPtwo$ and if the mitosis is of type $1.2$ then $\Omega' = \Omega \# \Stwo$ or $\Omega' = \Omega \# \Ttwo$ depending on if $\SL{O}'$ has more cells or the same number of cells as $\SL{O}$.
\end{remark}

\begin{lemma}
	In the setting of the previous lemma. If $\Omega = \Stwo$ and $\Omega'  = \Omega \# \Ttwo$, then a boundary circuit of a cell in $\SL{O}'$ is non-separating in $\Omega'$. If $\Omega$ is orientable and $\Omega' = \Omega \# \RPtwo$ then there is a M\"{o}bius band in $O'$ (i.e, a 1-sided circuit). \qed
\end{lemma}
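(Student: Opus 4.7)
My plan is to handle the two claims separately via a short mod-2 intersection-number argument against a canonical loop in the attached handle or crosscap. Throughout I use the local picture supplied by the previous lemma: all topological changes occur inside the disk $D$ used for the connected sum, which sits in the cell of $\SL{O}$ containing the divided circuit $\sigma$; outside $D$ the graph $X$, its fat-graph structure, and all other cells of $\SL{O}$ are unchanged, while the new edge $e$ enters $D$ from $p\in\sigma$ and exits to $q\in\sigma$. By the remark preceding the lemma, the hypothesis $\Omega'=\Omega\#\Ttwo$ forces the mitosis to be of type 1.2 with unchanged cell count, while $\Omega'=\Omega\#\RPtwo$ forces it to be of type 1.1.

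For the first claim, inside $D$ a torus-minus-disk is glued, and $e$ is an essential arc running across the handle. The divided circuit $\sigma$ is replaced by two new boundary circuits $\sigma_1,\sigma_2$ of the same cell of $\SL{O}'$, each realized as an embedded simple closed curve in $\Omega'=\Ttwo$ obtained by concatenating one of the two arcs of $\sigma-\{p,q\}$ with $e$. I would then choose a meridian $\mu$ of the handle: a simple closed curve lying entirely in the interior of the attached torus-minus-disk, going once around the handle transversely to $e$ and meeting $e$ in exactly one point. Since $\mu\subset D$ it is disjoint from $X$, so $\mu\cap\sigma_1=\mu\cap e$ is a single transverse point. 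Hence $[\mu]\cdot[\sigma_1]=1$ in $H_1(\Ttwo;\ZZ/2)$, which forces $[\sigma_1]\ne 0$ and so $\sigma_1$ is non-separating in $\Omega'$.

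For the second claim, inside $D$ a Möbius band $M$ is inserted, and the requirement that the new cell still be a holed sphere (tameness of $(O',\Omega')$) forces $e$ to be an essential arc across $M$, crossing the core circle $\kappa$ of $M$ in exactly one point. Since the fat graph $X$ is connected, I pick a simple path $\pi$ in $X$ from $q$ to $p$ (trivial if $p=q$) and form the embedded cycle $c=e\cdot\pi$ in $O'$. Since $\pi\subset X$ is disjoint from the interior of $D$, the cycle $c$ meets $\kappa$ only along $e$, hence in exactly one transverse point. Because $\Omega$ is orientable, $[\kappa]\in H_1(\Omega';\ZZ/2)$ is Poincaré dual to $w_1(\Omega')$, so $\#(c\cap\kappa)\equiv 1\pmod 2$ is exactly the condition that $c$ be 1-sided in $\Omega'$; a regular neighborhood of $c$ is therefore a Möbius band in $O'\subset\Omega'$.

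The main obstacle I anticipate is not the homology, which is automatic once the local picture is pinned down, but rather justifying that local picture: namely, that the combinatorial type of the mitosis together with the tameness of $(O',\Omega')$ forces $e$, up to isotopy inside $D$, to be a meridian-crossing arc in the torus case and a core-crossing essential arc in the Möbius-band case. This follows from the preceding lemma (and the figure accompanying it); for instance, in the Möbius-band case any arc crossing $\kappa$ an even number of times would leave the new cell non-orientable and hence not a holed sphere, contradicting tameness.
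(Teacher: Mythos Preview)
Your argument is correct. The paper itself offers no proof at all (the statement ends with \qed{} and is meant to be read off directly from Figure~\ref{fig:elem_ext} and the construction in Lemma~\ref{lem: type 1 mitosis}), so your intersection-number argument is a genuine elaboration rather than a different route to the same place.

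A couple of remarks. First, the ``obstacle'' you flag at the end is not really one: the placement of $e$ inside the connected-sum region is not something that must be \emph{deduced} from tameness, it is \emph{chosen} that way in the proof of the previous lemma (that is precisely what the figure records). So you may simply cite the construction rather than argue that tameness forces it. Second, for the $\RPtwo$ case you do not need the full Poincar\'e-duality identification of $w_1$: since $\Omega - D$ is orientable and $\pi\subset O\subset \Omega - D$, the loop $c=e\cdot\pi$ reverses orientation iff its portion inside the crosscap does, and that portion is exactly $e$, which crosses the core once. This is the same computation with less machinery. Finally, a minor point of notation: in this lemma the graph is called $O$, not $X$; your use of $X$ is harmless but clashes with the paper's convention, where $X$ denotes the PH-graph in $\Xi$.
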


\begin{lemma}\label{lem: type 2 mitosis}
	Let $\SL{O}$ be the organism structure of a tame animal $(O,\Omega)$, and let $\SL{O}',\SL{O}''$ be 2 twin mitoses of $\SL{O}$ of type 2. Then, exactly one of $\SL{O}',\SL{O}''$, say $\SL{O}'$, is the organism structure of a tame animal $(O',\Omega')$ such that $\Omega' = \Omega$ and  $O\embedsin O'$ comes from $O\embedsin \Omega = \Omega'$.
\end{lemma}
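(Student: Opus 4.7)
My plan is to construct the realized twin $\SL{O}'$ by drawing an embedded arc in the cell $C$ of $(O,\Omega)$ containing the two divided circuits $c_1,c_2$, and then rule out the other twin $\SL{O}''$ on the grounds that any such drawing in the orientable cell $C$ is forced to produce $\SL{O}'$.

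For the construction: since $(O,\Omega)$ is tame, the cell $C$ containing $c_1,c_2$ is a holed sphere; fix an orientation of $C$. Let $u\in c_1$ and $v\in c_2$ be the endpoints of the new edge. As $C$ is connected, I pick a properly embedded arc $e\colon[0,1]\to C$ with $e(0)=u$, $e(1)=v$, and $e((0,1))\subseteq\interior(C)$. Set $O':=O\cup e$. Cutting the holed sphere $C$ along $e$, which joins two distinct boundary components, yields a planar surface with one fewer boundary component, hence again a holed sphere $C'$; all other cells are unaffected. Therefore $(O',\Omega)$ is a tame animal whose organism structure is manifestly a type 2 mitosis of $\SL{O}$: the circuits $c_1,c_2$ merge into the single new circuit $\partial C'$. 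Call this organism structure $\SL{O}'$.

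For the exclusion: two twin type 2 extensions share the same underlying graph and the same local circular orders at both endpoints of the new edge; they differ only in the global matching of the two sides of the new edge along its length---essentially a half-twist. Suppose toward a contradiction that $\SL{O}''$ is realized by a tame animal $(O'',\Omega)$ with $O''=O\cup e'$. Since the mitosis must recover $\SL{O}$ after removing $e'$, and $e'$ must divide $c_1$ at $u$ and $c_2$ at $v$, the arc $e'$ must exit $u$ through the $c_1$-angle and enter $v$ through the $c_2$-angle, forcing $(e')^{\circ}\subseteq C$. But in the oriented holed sphere $C$ every embedded arc from $u$ to $v$ is 2-sided, and its two sides are matched along the arc in the unique way determined by the ambient orientation, independently of the isotopy class. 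Hence $e'$ induces on $X\cup e'$ the same fat-graph structure as $e$ induces on $X\cup e$, namely $\SL{O}'$, contradicting $\SL{O}''\ne\SL{O}'$.

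The main obstacle is making this side-matching comparison fully precise: one must identify $\SL{O}''$ as the fat graph obtained from $\SL{O}'$ by swapping the two global sides of $e$ (and hence reversing one of the two semicircuits that glue to form the new circuit), and observe that this swap is rigidly excluded by the orientability of the holed sphere $C$. Once that bookkeeping is set up, both the construction and the exclusion follow cleanly, using only that cutting a holed sphere along a properly embedded arc between two distinct boundary components yields a holed sphere.
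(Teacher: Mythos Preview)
Your argument is correct and is precisely the content that the paper leaves implicit in its one-line proof ``See Figure~\ref{fig:elem_ext}.'' You spell out the two points the figure is meant to convey: first, that an embedded arc in the holed-sphere cell $C$ joining the two divided circuits realizes one twin and keeps the cell a holed sphere; second, that orientability of $C$ forces the side-matching along any such arc, so the other twin cannot arise from an arc drawn in $\Omega$. The one place where you flag an ``obstacle''---making the side-matching comparison precise---is exactly the bookkeeping the paper offloads to the picture, and your description of it (the twins differ by a global half-twist along $e$, which an oriented planar cell cannot accommodate) is the right formalization.
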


\begin{proof}
	See Figure \ref{fig:elem_ext}.
\end{proof}

\begin{lemma}\label{lem: cancer implies unoriented}
	In the setting of the previous lemma. The other twin, $\SL{O}''$, is non-orientable (i.e, there is a M\"obius band in $\SL{O}''$).\qed
\end{lemma}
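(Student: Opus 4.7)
The two twin extensions have the same underlying graph and the same divided circuits, so they differ only in the local cyclic order at one endpoint of the new edge $e$. Equivalently, in the ribbon-graph picture, they differ by a half-twist of the ribbon of $e$: one twin attaches the rectangle representing $e$ to its two endpoint-disks with a consistent orientation (``untwisted''), while the other does so with an orientation reversal at one end (``twisted''). My plan is to argue that $\SL{O}'$ is the untwisted twin and that the twist in $\SL{O}''$ produces a 1-sided cycle.

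By Lemma~\ref{lem: type 2 mitosis}, $\SL{O}'$ embeds as a tame animal in $\Omega' = \Omega$ with $e$ drawn as an embedded arc in a single cell. A regular neighborhood of $e$ in $\Omega$ is then a genuine disk, so the ribbon of $e$ in $\SL{O}'$ is untwisted; hence $\SL{O}''$ must be the twisted twin. To confirm this rigorously, I would realize $\SL{O}''$ as a surface animal $(O'', \Omega'')$ by cutting a small open disk neighborhood of $e$ out of $\Omega$ and regluing its boundary via the antipodal map. A direct inspection of the local cyclic orders at the endpoints of $e$ shows that this procedure inverts the attachment of $e$ at exactly one endpoint and thus induces the organism structure $\SL{O}''$ rather than $\SL{O}'$.

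To produce the required Möbius band, I would split into two cases. If $\SL{O}$ is already non-orientable then so is the larger fat graph $\SL{O}'' \supseteq \SL{O}$, and we are done. If $\SL{O}$ is orientable, pick any embedded path $p$ in the connected graph $X$ from $v_2$ to $v_1$ (the endpoints of $e$). The cycle $\gamma := p \cup e$ lies in the graph underlying $\SL{O}''$, and its ribbon is untwisted along $p$ (since $\SL{O}$ is orientable) but has a half-twist at $e$. The total twist is odd, so the ribbon of $\gamma$ is a Möbius band, exhibiting $\gamma$ as a 1-sided cycle in $\SL{O}''$. The main obstacle is the bookkeeping in the middle paragraph: verifying that the antipodal regluing realizes precisely the cyclic-order flip that distinguishes $\SL{O}''$ from $\SL{O}'$, and matching this against Świątkowski-style conventions for the circular order on edges around a vertex.
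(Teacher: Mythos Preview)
Your argument is the natural one, and the paper gives no proof beyond the \qed and the figure; but there is a genuine gap. The step ``a regular neighborhood of $e$ in $\Omega$ is a disk, so the ribbon of $e$ in $\SL{O}'$ is untwisted'' conflates two things: the ribbon of any arc is a disk, but ``untwisted'' refers to how that disk attaches to the ribbon surface $R$ of $\SL{O}$, and this is governed by whether the chosen orientations of $R$ and of the holed-sphere cell $C$ agree along \emph{both} divided boundary circuits. When $\Omega$ is orientable they always agree and your argument goes through; when $\SL{O}$ is orientable but $\Omega$ is not, they can disagree on exactly one of the two circuits, making $\SL{O}'$ the non-orientable twin and $\SL{O}''$ the orientable one. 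Concretely: take $\Omega$ the Klein bottle and $O$ a two-sided nonseparating curve, so the unique cell is an annulus and $\SL{O}$ is orientable. Adding an arc $e$ across the annulus yields a tame animal $(O',\Omega)$ whose unique cell is a disk, so the closed surface associated to $\SL{O}'$ is $\Omega$ itself and $\SL{O}'$ is non-orientable; since exactly one twin is orientable when $\SL{O}$ is, $\SL{O}''$ is orientable here.

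Thus the lemma as stated (and hence any proof of it) fails in general. This does not break the only application, Lemma~\ref{lem: cancer treatment}: in the paper's inductive construction $\Omega$ becomes non-orientable only via a type~1.1 mitosis, which (by the remark after Lemma~\ref{lem: type 1 mitosis}) already produces a one-sided circuit in the wild animal and hence a one-sided PH-curve in $\Xi$. So the conclusion needed in the cancer-treatment argument survives, but the lemma itself needs the hypothesis that $\Omega$ is orientable (or that $\SL{O}$ is non-orientable) for your proof --- or any proof --- to work.
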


\begin{definition}
	A mitosis of a tame animal is \emph{benign} if it is either of type 0 or 1, or the twin $\SL{O}'$ as in Lemma \ref{lem: type 2 mitosis}. Otherwise, we call it \emph{cancerous}.
\end{definition}

\editA{
We summarize Lemmas \ref{lem: type 1 mitosis}, \ref{lem: type 2 mitosis}, \ref{lem: cancer implies unoriented} as follows.
\begin{lemma}\label{lem: summary of extensions}
    Let $(O,\Omega)$ be a tame animal, and let $\SL{O}$ be its organism structure. Let $\SL{O}'$ be a benign mitosis of $\SL{O}$. Then there exists a tame animal $(O',\Omega')$ with organism structure $\SL{O}'$, which is obtained from $(O,\Omega)$ by a connected sum with $\Stwo, \Ttwo, \RPtwo$ along a disk $D$ in a cell of $\Omega$, such that $O\embedsin O'$ comes from $O\embedsin \Omega -D \embedsin \Omega'$.
\end{lemma}

\begin{remark}\label{rem: choosing disks properly in mitoses}
We note that one can choose the disk $D$ so that it avoids a finite collection of curves on $\Omega$.
\end{remark}
}
The next lemma will show that cancerous mitoses occurring in wild animals can be avoided. More precisely, it shows that every cancerous mitosis can be further extended by an elementary extension, such that the new organism can also be obtained by performing two benign mitoses instead. 

\begin{lemma}[Cancer treatment]\label{lem: cancer treatment}
	Let $(X,\Xi)$ be a wild animal, and let $(O,\Omega)$ be a tame animal whose organism structure $\SL{O}$ is isomorphic to the organism structure $\SL{X}$ of $(X,\Xi)$. Let $e$ be a PH-arc in $\Xi$ such that $X'=X\cup\{e\}$ is an embedded PH-graph in $\Xi$. If the organism structure $\SL{X}'$ of the wild animal $(X',\Xi)$, is a cancerous mitosis of $\SL{O}$, then we can find another PH-arc $e'$ in a neighborhood of a point on $e$  such that $X'' = X\cup\{e,e'\}$  is isomorphic to a tame animal $(O'',\Omega'')$ that can be obtained from $(O,\Omega)$ by two benign mitoses.
\end{lemma}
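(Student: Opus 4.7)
My plan is to exploit the $1$-sidedness of the new boundary circuit $\gamma$ created by the cancerous addition of $e$: this is exactly the non-orientability that a benign type 1.1 mitosis $\Omega \rightsquigarrow \Omega\#\RPtwo$ is designed to absorb into the surface. The strategy is to construct a PH-arc $e'$ near a point on $e$ that, when added to $X$ alone, induces such a type 1.1 mitosis; after that, the addition of $e$ becomes benign, and the two steps together produce the desired tame animal $(O'',\Omega'')$.

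To construct $e'$, I would start from the fact that, since $\SL{X}'$ is cancerous, the circuit $\gamma$ merging the divided circuits $c_1,c_2$ is $1$-sided. Hence for an interior point $p$ of $e$ the two sided preimages $\sided p,\sidedop p\in \Xi\sidesof(X\cup\{e\})$ lie in the same component $\tilde C$. Joining them by an embedded arc $q\subset\tilde C$, its image $F(q)\subset\Xi$ is a loop based at $p$ that witnesses the local non-orientability. I would apply Lemma \ref{lem: paths to PH-arcs} to approximate $F(q)$ by an embedded PH-arc, then Lemma \ref{lem: pre PH to PH} to clean up the vertex structure, and finally adjust the endpoints (by pruning and extending along $e$ and using small transverse H-circles near $v_1$ or $v_2$) so that the resulting $e'$ has both endpoints on vertices of $X$ and $X\cup\{e,e'\}$ is an embedded PH-graph with exactly two more edges than $X$, contained in a small neighborhood of a point on $e$.

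To identify the two benign mitoses of $(O,\Omega)$ producing $(O'',\Omega'')$, I would reorder the extensions. Adding $e'$ alone to $X$: by construction the new PH-circuit created by $e'$ is $1$-sided, so the mitosis is of type 1.1 and hence benign; Lemma \ref{lem: type 1 mitosis} then yields $(O_1,\Omega_1)$ with $\Omega_1 = \Omega\#\RPtwo$. Adding $e$ to $X\cup\{e'\}$: the non-orientability that had forced the mitosis to be cancerous is now absorbed in $\Omega_1$, so this second mitosis is of type 0, type 1, or the good twin of type 2, and is therefore benign. Applying Lemmas \ref{lem: type 1 mitosis} and \ref{lem: type 2 mitosis} to this step delivers the tame animal $(O'',\Omega'')$ with $\SL{O}''\cong\SL{X}''$.

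The hardest step will be engineering $e'$ with exactly the right combinatorial behavior: its addition to $X$ must induce a type 1.1 mitosis (rather than a fresh cancerous type 2), its endpoints must be compatible with the graph structure of $X$ and the locality requirement, and the subsequent addition of $e$ must then be benign. This rests on a careful local analysis of the circular orders at the new vertices via Lemmas \ref{lem: 4 sides or less} and \ref{lem: sides of PH-graphs}, translating the $1$-sidedness of $\gamma$ into the $1$-sidedness of the small PH-circuit through $e'$, and verifying that the modified circular orders at $v_1$ and $v_2$ in $\SL{X}\cup\{e'\}$ are consistent with the good twin of the type 2 mitosis for $e$.
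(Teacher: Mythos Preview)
Your plan has a structural problem in the order of mitoses and a gap in where the non-orientability comes from.

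\textbf{Order of mitoses.} You want to add $e'$ to $X$ first (as a type~1.1 mitosis) and then add $e$. But the lemma asks for $e'$ in a neighborhood of a point on $e$, so $e'$ naturally has its endpoints on $e$, not on $X$; it cannot be added to $X$ alone as an elementary extension. Your workaround --- extending $e'$ along $e$ out to vertices of $X$ --- both violates the locality conclusion and forces the endpoints of $e'$ onto the two distinct circuits $c_1,c_2$ (the same ones $e$ meets), so that adding $e'$ to $X$ would again be a type~2 mitosis, not type~1.1. The paper avoids this entirely: it keeps $e'$ local with endpoints $q',q''\in e$, and \emph{redecomposes} the set $e\cup e'$ as $\rho'\cup f$, where $f$ is the middle segment $[q',q'']$ of $e$ and $\rho'$ is $e$ with that segment replaced by a detour. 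Then $X\to X\cup\{\rho'\}$ is a type~2 mitosis with the same endpoints as $e$, and $X\cup\{\rho'\}\to X''$ (adding back $f$) is a type~1 mitosis --- both benign.

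\textbf{Source of the 1-sided curve.} Your observation that $\sided p$ and $\sidedop p$ lie in the same component of $\Xi\sidesof X'$ holds for \emph{every} type~2 mitosis, benign or cancerous: in either twin the two old circuits merge into one and the cell $C-e$ stays connected. So the loop $F(q)$ exists regardless and does not by itself witness non-orientability; you have no argument that its PH-approximation is 1-sided. The paper instead uses Lemma~\ref{lem: cancer implies unoriented} only to deduce that $\Xi$ contains \emph{some} 1-sided PH-curve $M$, and then invokes the uniform convergence dynamics of $G\actson\Xi$ to find a translate of $M$ inside a small H-neighborhood of a point on $e$, disjoint from $X$. The two halves $M',M''$ of $M$ yield two detours $\rho',\rho''$; precisely because $M$ is 1-sided, these are the two twin type~2 extensions of $X$, so one of them is benign and can be chosen. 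This is the mechanism that forces the first mitosis to be the good twin --- a mechanism your argument does not supply, and your unproven assertion that ``the non-orientability \ldots\ is now absorbed in $\Omega_1$'' is exactly the missing step.
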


\begin{proof}
	By Lemma \ref{lem: cancer implies unoriented}, $X'$ is non-orientable, and hence $\Xi$ contains an embedded 1-sided PH-curve $M$. 
	By the uniform convergence dynamics of $G\actson \Xi$ we can find a 1-sided PH-curve $M$ in a small H-neighborhood $U$ around a point of $e$ and disjoint from $X$. By Lemma \ref{lem: paths to PH-arcs}, connect two points $p', p''$ on $M$ to some points $q',q''$ on $e$  by disjoint PH-arcs $\sigma',\sigma''$ respectively, both in the neighborhood $U$, and disjoint from $X'$ except for its endpoints $q',q''$ at which it is transverse to $e$.
	
	Let $q',q''$ break $e$ into 3 PH-arc segments $f',f,f''$. And let $p',p''$ break $M$ into two PH-arcs $M',M''$ oriented from $p'$ to $p''$.
	
	Consider the following two PH-arcs 
	\begin{align*}
	\rho' &= f'\cdot\sigma'^{-1}\cdot M' \cdot\sigma''\cdot f'',	\\
	\rho'' &= f'\cdot\sigma'^{-1}\cdot M''\cdot\sigma''\cdot f''.
	\end{align*}
	Note that since $M$ is 1-sided the PH-arcs $\rho',\rho''$ define twin mitoses of $X$. One of them is benign, say $\rho'$.
	Note that $X'' = X\cup \{\rho,f\} = X\cup \{e,e'\}$ where $e' = \sigma'^{-1}\cdot M'\cdot\sigma''$, and that $X''$ is obtained by a benign mitosis of type 2 along $\rho$ and then a (benign) mitosis of type 1 along $f$.
\end{proof}

\begin{figure}[h]
	\centering{
		\resizebox{0.6\linewidth}{!}{
			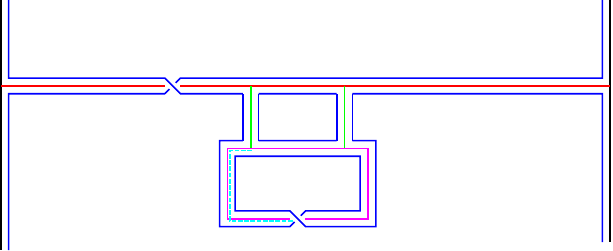
		}
		\caption{Cancer Treatment Lemma. The path $\rho'$ is marked as a dotted light blue line.}
		\label{fig:elem_ext}
	}
\end{figure}

\begin{remark}\label{rmk: safe cancer treatment}
	Note that the arcs added in Lemma \ref{lem: cancer treatment} can be chosen to have trivial intersection with any finite collection of PH-arcs.
\end{remark}

\section{Proof of the main theorem}

\subsection{The limiting process and strategy of proof}

\begin{definition}
	Let $Z$ be a compact metric space. Let $Z'\subseteq Z$. We denote by $\compdiam(Z')$ the supremum of diameters of components of $Z'$.
	Note that if $Z_n \subseteq Z$ is a sequence of subsets, then $\compdiam(Z_n)\to 0$ is a topological property that does not depend on the metric. It can be defined by saying that for every open cover $\mathcal{U}$ of $Z$, there exists $n$ such that each of the components of $Z_n$ \editA{is} contained in a set of the open cover $\mathcal{U}$.
\end{definition}

In what follows, the spaces we consider are always compact metrizable, and so we will use $\compdiam(Z_n)\to 0$ even in the absence of a fixed metric.

\begin{lemma}[Limit argument]\label{lem: limit argument}
	Let $\Xi$ and $\Omega$ be compact metrizable spaces. Let $X_n$ be an increasing sequence of embedded compact graphs in $\Xi$ such that $(X_n,\Xi)$ are animals, and let $f:\bigcup_n X_n \to\Omega$ be an injective \editA{function such that $f|_{X_n}$ is continuous for all $n$.} Assume that $(f(X_n),\Omega)$ are animals, and $f|_{X_n}$ induces an organism isomorphism between the corresponding organism structures. If $\compdiam(\Xi - X_n) \to 0$ and $\compdiam(\Omega - f(X_n))\to 0$, then $f$ extends uniquely to a homeomorphism $\Xi \to \Omega$.
\end{lemma}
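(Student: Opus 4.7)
The plan is to build $\tilde f\colon \Xi\to\Omega$ pointwise by transferring, for each $\xi\in \Xi$, a shrinking family of closed cell-neighborhoods from $\Xi$ to $\Omega$ via the organism isomorphisms $\phi_n$ induced by $f|_{X_n}$. Write $\mathcal{C}_n(\xi)$ for the finite set of cells of $\Xi-X_n$ whose closure contains $\xi$, and set
\[
U_n(\xi) = \bigcup\{\overline{C} : C\in \mathcal{C}_n(\xi)\}, \qquad V_n(\xi)=\bigcup\{\overline{D} : D\in \phi_n(\mathcal{C}_n(\xi))\}.
\]
Define $\tilde f(\xi)$ to be the unique point of $\bigcap_n V_n(\xi)$. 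That $\bigcup_n X_n$ is dense in $\Xi$ is immediate from local connectedness of $\Xi$ (an animal axiom, which together with connectedness of $\Xi$ prevents isolated points): otherwise a connected open $W$ disjoint from every $X_n$ would sit inside a single component of $\Xi-X_n$ for all $n$, contradicting $\compdiam(\Xi-X_n)\to 0$.

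The main combinatorial point, and what I expect to require the most care, is the nesting $V_{n+1}(\xi)\subseteq V_n(\xi)$. This reduces to showing that when a cell $C$ of $(X_n,\Xi)$ is subdivided by the edges of $X_{n+1}-X_n$ into subcells $C^{(1)},\ldots,C^{(k)}$, the cells $\phi_{n+1}(C^{(i)})$ subdivide $\phi_n(C)$. Unpacking the definition of inclusion of organisms, the cell $\phi_n(C)$ in the organism of $(f(X_n),\Omega)$ is exactly the equivalence class, under gluing along edges of $f(X_{n+1})-f(X_n)$, of the sided edges forming the cells $\phi_{n+1}(C^{(i)})$, because both $\phi_n$ and $\phi_{n+1}$ are induced by the single map $f$. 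Hence $\phi_{n+1}(C^{(i)})\subseteq \phi_n(C)$ for each $i$. Combined with the fact that all cells in $\phi_n(\mathcal{C}_n(\xi))$ share a common point (namely $f(\xi)$ when $\xi\in X_n$, and vacuously when $|\mathcal{C}_n(\xi)|=1$), we get $\diam V_n(\xi)\le 2\compdiam(\Omega-f(X_n))\to 0$, so $\bigcap_n V_n(\xi)$ is a single point. When $\xi\in X_m$ this point is $f(\xi)$, so $\tilde f$ extends $f$.

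Continuity at $\xi$ follows by choosing $n$ with $\diam V_n(\xi)<\epsilon$: the set $U_n(\xi)$ is a closed neighborhood of $\xi$, since closures of distinct open cells of $\Xi - X_n$ meet only in $X_n$, which means any cell outside $\mathcal{C}_n(\xi)$ is disjoint from $U_n(\xi)$, placing $\xi$ in $\interior U_n(\xi)$. For any $\xi'\in \interior U_n(\xi)$ the same reasoning gives $\mathcal{C}_n(\xi')\subseteq \mathcal{C}_n(\xi)$, hence $V_n(\xi')\subseteq V_n(\xi)$, and $d(\tilde f(\xi'),\tilde f(\xi))<\epsilon$. Running the symmetric construction (using that $f^{-1}\colon \bigcup f(X_n)\to \bigcup X_n$ is continuous, each $f|_{X_n}$ being a homeomorphism of compacta) produces a continuous $\tilde g\colon \Omega\to \Xi$, and the compositions $\tilde g\circ \tilde f$ and $\tilde f\circ \tilde g$ agree with the identities on the dense sets $\bigcup X_n$ and $\bigcup f(X_n)$, hence everywhere. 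Thus $\tilde f$ is a homeomorphism, and uniqueness follows from the same density and continuity.
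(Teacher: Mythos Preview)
Your argument is correct and complete in its essentials, but it takes a genuinely different route from the paper. The paper does not build the extension pointwise; instead it observes that it suffices to show $f$ is \emph{uniformly continuous} on $\bigcup_n X_n$ (whence it extends to $\Xi=\overline{\bigcup_n X_n}$, is surjective because $\compdiam(\Omega-f(X_n))\to 0$ forces $\overline{f(\bigcup_n X_n)}=\Omega$, and is injective because components of $\Xi-X_n$ biject with components of $\Omega-f(X_n)$). Uniform continuity is then proved by a direct $\epsilon/3$ argument: given $\epsilon$, pick $n$ with $\compdiam(\Omega-f(X_n))<\epsilon/3$, use uniform continuity of $f|_{X_n}$ to get $\delta'$, and choose $\delta$ so that nearby points of $\bigcup_n X_n$ have nearby ``boundary witnesses'' on $X_n$.

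Your construction via the nested closed cell-neighborhoods $U_n(\xi)$ and $V_n(\xi)$ is more explicit and, in a sense, more informative: it makes visible exactly how the organism isomorphisms transport the topology. The price is the extra work you flagged, namely the compatibility $\phi_{n+1}(C')\subseteq\phi_n(C)$ for $C'\subseteq C$, which you deduce from the fact that both $\phi_n$ and $\phi_{n+1}$ are induced by the single map $f$ together with the definition of organism inclusion; the paper's route sidesteps this entirely. Two small remarks: your parenthetical that $f^{-1}$ is continuous on $\bigcup f(X_n)$ is not needed (and does not follow just from each $f|_{X_n}$ being a homeomorphism); the symmetric construction of $\tilde g$ only requires the organism isomorphisms $\phi_n^{-1}$, and you verify $\tilde g|_{f(X_n)}=f^{-1}|_{f(X_n)}$ exactly as you verified $\tilde f|_{X_n}=f$. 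Also, the sentence ``any cell outside $\mathcal{C}_n(\xi)$ is disjoint from $U_n(\xi)$'' is correct as written for the \emph{open} cell $C'$, but what actually yields $\xi\in\interior U_n(\xi)$ is that $\xi\notin\bigcup_{C'\notin\mathcal{C}_n(\xi)}\overline{C'}$, a closed set whose complement is the open neighborhood you want; this uses that every point of $X_n$ lies in some $\overline{C}$, which follows from the animal axiom that every edge has two sides.
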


\begin{proof}
	Endow $\Xi$ and $\Omega$ with some compatible metric. It is clear that $\Xi=\overline {\bigcup_n X_n}$. We first claim that it suffices to show that $f$ is uniformly continuous. \editA{To see this, assume that we have shown that $f$ is uniformly continuous. Since $\Omega$ is a compact metric space, it is a complete metric space. A uniformly continuous function with a range which is a complete metric spaces, can be extended to the closure of its domain. Hence, $f$ extends to a continuous function, which we will call $f$ as well,  $f:\Xi=\overline {\bigcup_n X_n}\to \Omega$.}  Since $\compdiam(\Omega-f(X_n))\to 0$ it is clear that $\overline{f(\bigcup_n X_n)}= \Omega$ and since $\Xi$ is compact we get that $f$ is surjective.  Since \editA{for any two points in $\Xi$ there exists $n$ such that $X_n$ separates them}, and components of $\Xi-X_n$ are mapped bijectively to components of $\Omega - f(X_n)$ it is clear that the extension is injective.
	
	To show that $f$ is uniformly continuous, let $\epsilon>0$ and let $n$ be such that $\compdiam(\Omega-f(X_n))<\epsilon/3$.
	By uniform continuity of $f$ on $X_n$, let $\delta'$ be such that for all $x'_1,x'_2\in X_n$, $d(x'_1,x'_2)<\delta'\implies d(f(x'_1),f(x'_2))<\epsilon/3$. 
	Now let $\delta$ be such that if $x_1,x_2\in \bigcup_n X_n$ are at distance $\delta$ then there are two points $x'_1,x'_2$ in the boundaries of the components of $x_1,x_2$ in $\Xi-X_n$ respectively at distance at most $\delta'$. 
	
	Let $x_1,x_2\in\bigcup_n X_n$ be such that $d(x_1,x_2)<\delta$, and let $d(x'_1,x'_2)<\delta'$ as in the definition of $\delta$. By the choice of $n$, $d(f(x_i),f(x'_i))\le\epsilon/3$, and by the choice of $\delta'$, $d(f(x'_1),f(x'_2))<\epsilon/3$. So, $d(f(x_1),f(x_2))<\epsilon$ as required.
\end{proof}

The strategy of the proof of Theorem \ref{main thm}, is to find an increasing sequence of wild animals $(X_n,\Xi)$, and in parallel to find isomorphic increasing sequence of tame animals $(O_n,\Omega_n)$ by a `back-and-forth' argument, at each time guaranteeing that $\compdiam$ of both $(X_n,\Xi)$ and of $(O_n,\Omega_n)$ limits to 0. We thus divide the argument between the following two subsections. In Subsection \ref{grow a wild animal}, given $X_n$ in $\Xi$ and $\epsilon>0$ we show how one finds $X_{n+1}$ with $\compdiam(\Xi- X_{n+1})<\epsilon$ and an isomorphic $(O_{n+1},\Omega_{n+1})$. In Subsection \ref{tame a wild animal}, given $(O_n,\Omega_n)$ and an open cover we show how to find $(X_{n+1},\Xi)$ and an isomorphic $(O_{n+1},\Omega_{n+1})$ extending $(O_n,\Omega_n)$ such that $\compdiam(\Omega_{n+1}-O_{n+1})$ refines the (induced) open cover. In Subsection \ref{Jakobsche} we show that the inverse limit of $(O_n,\Omega_n)$ is one of the three possibilities described in Theorem \ref{main thm} using a theorem of Jakobsche. 
\editA{Finally, in Subsection \ref{subsec: proof of main thm}, we put these ingredients together and prove Theorem \ref{main thm}.}

\subsection{How to grow a wild animal}\label{grow a wild animal}


\begin{lemma}\label{lem: refine wild partition}
	Given a PH graph $X$ in $\Xi$ and $\epsilon>0$, there is a PH graph $X'$ in $\Xi$ that extends $X$ and has $\compdiam(X')<\epsilon$.
	\editA{Moreover, if there exists a non-planar (resp. unorientable) PH graph in $\Xi$, then $X'$ can be assumed to contain a non-planar (resp. unorientable) PH-sub-graph in component of $\Xi-X$.}
\end{lemma}

\begin{proof}
	\editA{Let $Y$ is a non-planar (resp. unorientable) PH graph in $\Xi$, if such exists. By uniform convergence of $G\actson \partial G = \Xi$, we can find translates $g_1,\ldots,g_n \in G$ such that every component of $\Xi-X$ contains one of the translates $g_1 Y,\ldots,g_n Y$ of $Y$.
	
	Let $Z$ be a collection of H-circles such that $\compdiam(Z)<\epsilon$, which includes all H-circles which are used to define $g_1 Y \cup \ldots \cup g_n Y$. Add the circles in $Z$ to $X$, to form an embedded pre-PH-graph $\tilde{X}$. }
	Let $U_i$ be neighborhoods of the vertices of $\tilde{X}$ of diameter $<\epsilon$. Apply Lemma \ref{lem: pre PH to PH} to obtain a PH-graph $X'$ with $\compdiam(X')<\epsilon$, as any component of $\Xi-X'$ is either contained in a component of $\Xi-\tilde{X}$ (and hence in a component of $\Xi-X$) or in some $U_i$. 
	
	\editA{It follows from the construction in Lemma \ref{lem: pre PH to PH} that in the neighborhood of each $g_i Y$ the graph $X'$ will contain a non-planar (resp. unorientable) PH-graph.}
\end{proof}


\begin{lemma}\label{lem: extending animals}
	\editA{Let $(X,\Xi)$ be a wild animal, and $(O,\Omega)$ a tame animal with isomorphic organism structures, $\SL{X},\SL{O}$ respectively.} Let $X'$ be a PH-graph extension of $X$ with then there is a further extension $X''$ of $X'$ such that $X''$ is obtained from $X$ by a sequence of benign elementary extensions, that is, there is a tame animal $(O'',\Omega'')$ with the same organism structure \editA{$\SL{O}''$} as that of $(X'',\Xi)$, and $\SL{O}''$ is obtained by a sequence of benign extensions of  $\SL{O}$. \qed
\end{lemma}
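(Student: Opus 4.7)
The plan is to realize the inclusion $X \embedsin X'$ as a sequence of elementary extensions and to process them one at a time, inserting cancer-treatment arcs whenever a cancerous mitosis arises. Concretely, fix an enumeration $X = X^{(0)} \subset X^{(1)} \subset \dots \subset X^{(k)} = X'$ where each $X^{(i+1)}$ is obtained from $X^{(i)}$ by adding a single PH-arc $e_i$. Each step is an elementary extension of the underlying fat graph, and the classification (type 0, 1.1, 1.2, or 2) is determined by how $e_i$ meets the existing boundary circuits of $X^{(i)}$.

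I will build in parallel an increasing sequence of tame animals $(O^{(0)}, \Omega^{(0)}) \subset (O^{(1)}, \Omega^{(1)}) \subset \dots$ with matching organism structures. For extensions of type 0 or type 1, Lemma \ref{lem: type 1 mitosis} lets me realize the mitosis on the tame side by a connected sum with $\Ttwo$, $\Stwo$, or $\RPtwo$ along a disk in the appropriate cell; these mitoses are benign by definition. For a type 2 extension, Lemma \ref{lem: type 2 mitosis} produces two twins: if the wild mitosis is the benign one, I realize it on the tame side with $\Omega^{(i+1)} = \Omega^{(i)}$ and move on.

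The only remaining case is a cancerous type 2 extension. Here I invoke the Cancer Treatment Lemma \ref{lem: cancer treatment}: it supplies a PH-arc $e_i'$ lying in a small H-neighborhood of a point of $e_i$, such that $X^{(i)} \cup \{e_i, e_i'\}$ can instead be built from $X^{(i)}$ by two benign mitoses (a benign type 2 along some $\rho$, followed by a benign type 1 along some $f$). I insert these two benign mitoses into my construction on the tame side in place of the cancerous one. The crucial point is Remark \ref{rmk: safe cancer treatment}: the PH-arcs $e_i'$ produced by cancer treatment can be chosen disjoint from any prescribed finite collection of PH-arcs. Since the target $X'$ is already fixed, I apply this with respect to the finite collection $X' \cup \{\text{previously inserted treatment arcs}\}$, so each treatment arc is added in a tiny neighborhood disjoint from everything else in the construction. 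This guarantees the insertions are independent and never interfere.

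The main obstacle is precisely this bookkeeping — ensuring that the treatment arcs inserted to cure one cancerous mitosis do not create new complications for later extensions or overlap with earlier treatments — and it is handled cleanly by the disjointness provided by Remark \ref{rmk: safe cancer treatment}. After processing all $k$ edges, let $X''$ be the resulting wild PH-graph and $(O'', \Omega'')$ the resulting tame animal; by construction the organism structures of $(X'', \Xi)$ and $(O'', \Omega'')$ are isomorphic, and $X''$ is obtained from $X$ by a sequence of benign elementary extensions. Finally, since $X' \subseteq X''$, every component of $\Xi - X''$ is contained in a component of $\Xi - X'$, so $\compdiam(\Xi - X'') \le \compdiam(\Xi - X') < \epsilon$, as required.
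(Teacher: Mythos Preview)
Your proposal is correct and follows essentially the same approach as the paper: the paper's proof is the one-sentence remark preceding the lemma, which says to realize $X \subset X'$ as a chain of elementary extensions, add edges one at a time, and invoke Lemma~\ref{lem: cancer treatment} together with Remark~\ref{rmk: safe cancer treatment} whenever a cancerous type~2 mitosis appears so that the treatment arcs do not interfere with later steps. You have simply spelled out the bookkeeping (the enumeration $X^{(0)}\subset\dots\subset X^{(k)}$, the parallel tame sequence, and the final $\compdiam$ inequality) that the paper leaves implicit.
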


\editA{
\begin{proof}
We can obtain the PH-graph $X'$ from $X$ by adding an arc at a time, thus getting a sequence $X=X_0\subset \ldots \subseteq X_n=X'$ of elementary extensions, whose corresponding organism structures $\SL{X}_i$ undergo mitoses. 
Let $e_i$ be the PH-arc that is added to $X_i$ to form $X_{i+1}$.
Inductively, we build a sequence $X=Y_0\subset \ldots Y_n=:X''$ of PH-graphs and a sequence $(O,\Omega)=(O_0,\Omega_0),\ldots,(O_n,\Omega_n)$ of tame animals, such that $X_i\subseteq Y_i$, $(Y_i - X_i)\cap X'=\emptyset$, and the organism structure of $(O_i,\Omega_i)$ is isomorphic to that of $(Y_i,\Xi)$.

At the $i$-th step of the induction, the organism structure of $(O_i,\Omega_i)$ is isomorphic to that of $(Y_i,\Xi)$. Note that since $(Y_i - X_i)\cap X'=\emptyset$ the PH-arc $e_i$ is disjoint from $Y_i$. Let $Y_{i+1}'=Y_i \cup \{e_i\}$ be the elementary extension of $Y_i$ obtained by adding $e_i$. 

If the elementary extension is benign, set $Y_{i+1} = Y_i'$, and by Lemma \ref{lem: summary of extensions} perform an isomorphic elementary extensions on $(O_i,\Omega_i)$ to form the tame animal $(O_{i+1},\Omega_{i+1})$ with isomorphic organism structure as $(Y_{i+1},\Xi)$. 

If the elementary extension if cancerous, perform cancer treatment in the following sense. By Lemma \ref{lem: cancer treatment}, there exists a PH-arc $e_i'$ such that the organism structure of the PH-graph $Y_{i+1} = Y_i \cup \{e_i,e_i'\}$ is isomorphic to a tame animal $(O_{i+1},\Omega_{i+1})$ that can be obtained from $(O_i,\Omega_{i+1})$ by two benign mitoses. By Remark \ref{rmk: safe cancer treatment} we may assume $e_i'$ is disjoint from $X'$, thus maintaining the assumption $(Y_{i+1} - X_{i+1})\cap X'=\emptyset$.
\end{proof}

}

\editA{As we will see, t}his completes one half of the proof. In the next subsection we show that any reasonable subdivision of the tame animal can be mimicked in the wild animal.

\subsection{How to tame a wild animal} \label{tame a wild animal}
%

\begin{lemma}\label{PONG}
	Let $(X,\Xi)$ be a wild animal, and let $(O,\Omega)$ be a tame animal with isomorphic organism structures, and let $f:\SL{X}\to\SL{O}$ be an isomorphism of the organisms of $(X,\Xi)$ and $(O,\Omega)$ respectively.
	For every open cover $\mathcal{U}$ of $\Omega$ there exists an PH-graph extension \editA{$X'$ of $X$}, an extension $(O',\Omega')$ of $(O,\Omega)$ with a map $F:\Omega'\to\Omega$ obtained by connected sums in the cells of $\Omega-O$, and an isomorphism $f':\SL{X}'\to \SL{O}'$ of the corresponding organisms extending $f$, \editA{i.e $F \circ f'|_X = f$}, and such that each component of $\Omega'-O'$ is in a subset of $F^{-1}(U)$ for some $U\in\mathcal{U}$.
\end{lemma}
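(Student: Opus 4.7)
My proof plan is cell-by-cell: build the refinement separately in each cell of $(O,\Omega)$, then assemble.

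\emph{Design on the tame side.} Fix a cell $C$ of $(O,\Omega)$; since the animal is tame, $\overline{C}$ is a compact holed sphere. Using compactness of $\overline{C}$ and the cover $\mathcal{U}$ restricted to $\overline{C}$, triangulate $\overline{C}$ finely enough to obtain a finite graph $\Gamma\subset\overline{C}$ meeting $\partial C$ only at vertices, such that every component of $C-\Gamma$ is contained in some $U\in\mathcal{U}$. Order the edges of $\Gamma$ as $\gamma_1,\dots,\gamma_n$.

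\emph{Realize on the wild side, one edge at a time.} Suppose inductively we have built compatible extensions with an organism isomorphism extending $f$ after processing $\gamma_1,\dots,\gamma_{i-1}$. Transport the two endpoints of $\gamma_i$ through this organism isomorphism to specific points on the corresponding wild edges. By Lemma \ref{lem: paths to PH-arcs} find a PH-arc $\delta_i$ in the relevant wild cell with these endpoints. Adding $\delta_i$ produces an elementary mitosis; if that mitosis is cancerous, apply Lemma \ref{lem: cancer treatment} to replace $\delta_i$ by two PH-arcs realizing a sequence of two benign mitoses, using Remark \ref{rmk: safe cancer treatment} to keep the auxiliary arc disjoint from everything slated to be added later.

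\emph{Update the tame side.} For each benign wild mitosis, perform the matching mitosis of the tame organism via Lemma \ref{lem: type 1 mitosis} or \ref{lem: type 2 mitosis}. A type-1 benign mitosis requires a connected sum of the current $\Omega$ with one of $\Stwo$, $\RPtwo$, or $\Ttwo$ along a disk $D$; place $D$ inside the tame component of $C-\{\gamma_1,\dots,\gamma_i\}$ that will correspond to the new cell of the tame organism.

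\emph{Cover condition.} By construction each final cell of the tame organism is the $F$-image of a component $P$ of $C-\Gamma$, together with all connected sums performed within $P$; hence it is contained in $F^{-1}(U)$ for whichever $U\in\mathcal{U}$ contains $P$. The main difficulty is that PH-arcs in $\Xi$ produce mitoses whose type is dictated by the topology of $\Xi$, not by what we would like on the tame side — in particular they may be cancerous. Lemma \ref{lem: cancer treatment} removes this obstacle: it converts any cancerous mitosis into a pair of benign ones at the cost of an extra PH-arc located in an arbitrarily small neighborhood, which therefore does not disrupt the cover condition.
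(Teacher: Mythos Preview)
Your plan has a real gap at the type~1 step. When $\delta_i$ joins two points on the \emph{same} boundary circuit of a wild cell, the resulting mitosis is automatically benign (cancer arises only in type~2), so Lemma~\ref{lem: cancer treatment} is irrelevant here. What you cannot control is \emph{which} type~1 mitosis you get: $\delta_i$ may fail to separate the wild cell, giving type~1.1 or type~1.2 with the same number of cells. By the remark after Lemma~\ref{lem: type 1 mitosis}, the matching tame mitosis is then a connected sum with $\RPtwo$ or $\Ttwo$, and the new tame edge runs over the crosscap or handle, so the tame cell is \emph{not} cut in two. Your assertion that ``each final cell of the tame organism is the $F$-image of a component $P$ of $C-\Gamma$'' therefore fails: after such a step the single tame cell still has $F$-image equal to all of $C$, and the cover condition is not achieved. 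Iterating does not obviously help, since each new PH-arc may again be non-separating.

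The paper's Step~2 avoids this by never relying on a single PH-arc to separate. Instead it chooses one of the two boundary sub-arcs $\delta\subset\partial D$ between the desired endpoints, covers $\delta$ by small half-H-neighbourhoods, and takes the boundary $\partial A$ of their union; by construction $A$ and $D- A$ are the two pieces, so separation is forced. The price is that $\partial A$ is not a single PH-arc but a PH-arc $\gamma$ together with stray closed PH-curves $\gamma_1,\dots,\gamma_s$. These extra curves are then mirrored on the tame side by connected sums with tori (one per $\gamma_i$), with auxiliary arcs $\beta_i,\beta_i'$ and $t_i,t_i'$ to make the organism structures agree. This is the missing idea in your argument: you must build a separating object on the wild side, not merely a PH-arc with the right endpoints.
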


\begin{proof}
	
The summarize the two steps we will perform, details of each step will follow.

Step 1. Extend $O$ by adding arcs so that each cell has one boundary component. Find similar PH-arcs in $\Xi$ extending $X$.

Let us denote by $(O'',\Omega'')$ and $(X'',\Xi)$ these isomorphic extensions (note that $\Omega'' = \Omega$).

Step 2. Each cell of $(O'',\Omega'')$ is a disk. Add arcs to $O''$ to divide each disk into components which are subsets of $\mathcal{U}$. Trying to mimic this in $\Xi$ using open sets coming from hyperplanes we see that we create some additional boundary circles. To deal with them we extend $\Omega=\Omega''$ to $\Omega'$ by introducing connected sums with small tori. This process is detailed below.

\subsubsection*{Step 1: Adding an arc of type 2}

Let $(X,\Xi)$ and $(O,\Omega)$ as in Lemma \ref{PONG}, and let $(O^\dagger, \Omega)$ be a type 2 extension, i.e, adding an arc $e^\dagger$ on a cell of $(O,\Omega)$ connecting two distinct boundary cycles. Let $\sided{o},\sided{o}'\in O$ be the two (sided) endpoints of the added arc $e^\dagger$.
Let $\sided{a},\sided{a}'\in X$ be their preimages under $f$. We may assume that $\sided{a},\sided{a}'$ are in the interior of the sided H-arcs $\sided{\alpha},\sided{\alpha}'$ of $X$ (as slightly perturbing the endpoints of the edge in will not affect the property that the components will be contained in $\mathcal{U}$). 

Note that $\sided{a}$ and $\sided{a'}$ are in the same cell of $(X,\Xi)$, and so there is an arc $\gamma$ in this cell, connecting $\sided{a}$ and $\sided{a'}$.
Cover $\gamma$ by small H-neighborhoods $B_1,\ldots,B_n$, bounded by the H-circles $\beta_1,\ldots,\beta_n$, in $C$ (here $\beta_1,\beta_n$ are understood to be half-H-circles which define the sided neighborhoods of $\sided{a},\sided{a}'$). On the union $\beta_1\cup\ldots\cup\beta_n$ there is a path which is piecewise H-arc that connects two points $c, c'$ on $\beta,\beta'$, and so together with some segments in $\beta,\beta'$ we get a pre-PH-arc path from $a$ to $a'$. \editA{By Lemma \ref{lem: pre-PH-arc to PH-arc}} we can replace it by a PH-arc $\gamma'$.

Following the same lines as the proof of Lemma \ref{lem: cancer treatment}, if $\Xi$ is non-orientable, then one can find some $\gamma''$ such that $\gamma'$ and $\gamma''$ are the twin extensions. Let us denote by $\gamma'$ the one that defines the same organism structure as $e^\dagger$ defines on $\Omega$. Note that if $\Xi$ is oriented then so is $\Omega$, and $\gamma'$ constructed above already has the same organism structure as $e^\dagger$ as only one of the twin defines extends orientably.

In this construction, the endpoints of $\gamma$ can be chosen arbitrarily close to $a$ and $a'$.  Let $e$ be the arc in $\Omega$ which is identical to $e^\dagger$ except for a neighborhood of their endpoints, and the endpoints of $e$ are the images of the endpoints of the PH-arc $\gamma'$ under the map $f$. This way $f$ extends to a map from $X\cup\gamma'$ to $O\cup e$.

\subsubsection*{Step 2: Adding arcs of type 1}

Performing Step 1 enough times we may assume that each cell of $(O,\Omega)$ has one boundary cycle. At this point, one needs to perform extensions of type 1 to reduce the size of cells (to fit into sets of $\mathcal{U}$). Let $e_1,\ldots,e_n$ be the edges added to a cell $C$ of $(O,\Omega)$, in this order, so that the components of $C-(e_1\cup\ldots\cup e_n)$ are in subsets of $\mathcal{U}$.

We will describe how to add $e_1$, the edges $e_2,\ldots,e_n$ are added inductively in the same way.

Let us denote by $e=e_1$. Let $\partial C$ be the boundary circuit of $C$ and let $\sided{o},\sided{o}'$ be the endpoints of $e \in \partial C$. Let $D$ and $\partial D$ be the corresponding cell and boundary in $(X,\Xi)$, and let $\sided{a}, \sided{a'}$ be the preimages of $\sided{o},\sided{o}'$ under $f$. Let $\delta\subseteq \partial D$ be one of the two arcs connecting $\sided{a},\sided{a'}$. Let $A_1,\ldots,A_m$ be a cover of $\delta$ by half-H-neighborhoods in $\Xi\sidesof X$, such that $A_1$ and $A_m$ contain $\sided{a},\sided{a'}$ respectively and are as small as needed, and each $A_i\cap \partial D$, for $1<i<m$, is in $\delta$.  Let $A= \bigcup_j A_j$. Note that $\partial A$ contains a PH-arc $\gamma$ between small neighborhoods of $\sided{a},\sided{a'}$, it might contain more components, which are closed PH-curves, $\gamma_1,\ldots,\gamma_s$ which are contained in $D$. 

Note that $\partial A$ divides $D$ into at least two components -- the component $A$ and a component of $D-A$ that intersects $\partial D - \delta$. Any other components of $D-A$ is contained in the interior of $D$, and thus, by adding more H-neighborhoods we may cover it completely, eliminating it without adding more components. Therefore we may assume that $\partial A$ divides $D$ into 2 components. Similar to the construction of PH-arcs in the previous step, for each $1\le i\le s$, let $\beta_i$ (resp. $\beta_i'$) be PH-arcs connecting  $\gamma_i$ to $\gamma$ in $A$ (resp. in $D-A$) whose endpoints on $\gamma$ are ordered as $\beta_1\cap\gamma,\beta_1'\cap\gamma,\ldots,\beta_s\cap\gamma,\beta_s'\cap\gamma$. Moreover, we require that if $\Xi$ is non-orientable then the sub-organism made by $\gamma\cup\gamma_i\cup\beta_i\cup\beta_i'$ is oriented by replacing $\beta_i,\beta_i'$ if needed as in Lemma \ref{lem: cancer treatment}.

Let $U_1,\ldots,U_s$ be small disjoint disc neighborhoods of $s$ points on $e$ in $\Omega$ (each intersecting $e$ in an arc and not intersecting any of $e_2,\ldots,e_n$). Let $\Omega'$ be the connected sum of $\Omega$ with $s$ tori. I.e. $\Omega'$ is the gluing of $s$ holed tori $T_1,\ldots,T_s$  and $\Omega - (U_1\cup \ldots \cup U_s)$ along the corresponding boundaries. Let $F:\Omega'\to \Omega$ be a continuous map such that $F$ is a homeomorphism between $\Omega' - (T_1\cup\ldots\cup T_s)$ and $\Omega - (U_1\cup\ldots\cup U_s)$.

As in Step 1, we change $e$ slightly so that its endpoints coincide with the images of the endpoints of $\gamma$ under the map $f$.
Let $e'$ be the arc in $\Omega'$ which is the union of $e - (U_1\cup\ldots\cup U_s)$ and $s$ arcs on $T_1,\ldots,T_s$. For each $1\le i\le s$, let $u_i$ be the simple closed curve on $T_i$ which is disjoint from $e'$. Note that $E=e'\cup(u_1\cup \ldots\cup u_s)$ divide $F^{-1}(C)$ into two components, $B_1$ and $B_2$. Without loss of generality $B_1$ is the components that contains $f(\delta)$. Let $t_i$ (resp. $t_i'$) be arcs in $T_i\cap B_1$ (resp. $T_i\cap B_2$) connecting $u_i$ to $e$, and such that the endpoints of $t_i$ and $t_i'$ on $e$ are ordered as $t_1\cap e,t_1'\cap e,\ldots,t_s\cap e,t_s'\cap e$. See Figure \ref{fig:PONG}.

Now extend $f$ to $X\cup (\gamma\cup\bigcup_{i=1}^s(\gamma_i\cup\beta_i \cup \beta_i')) \to O\cup (e'\cup\bigcup_{i=1}^s(u_i\cup t_i \cup t_i'))$ in the obvious way.
\end{proof}

\begin{figure}[h]
	\centering{
		\resizebox{\linewidth}{!}{
			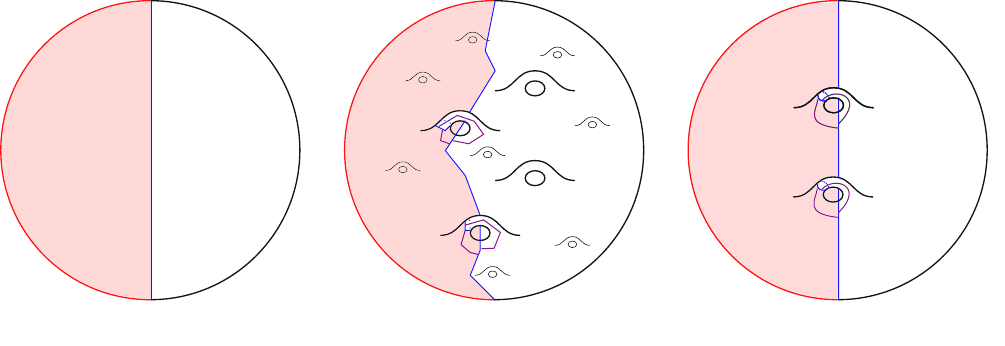
		}
		\caption{Step 2 of taming the wild animal.}
		\label{fig:PONG}
	}
\end{figure}

\subsection{What have we created?!}\label{Jakobsche}
\editA{The goal of this subsection is to describe the final ingredient of the proof that will enable us to classify the objects we have created as the inverse limit $\invlim \Omega_n$}. To do that, we use the construction by Jakobsche \cite{jakobsche1991homogeneous} of trees of manifolds. We give an account of this construction by Świątkowski \cite{swiatkowski2013treesofcompacta} adapted to our \editA{2-dimensional} setting.

\begin{definition}[Świątkowski \cite{swiatkowski2013treesofcompacta}]
	\editA{Let $\mathcal{M}$ be one of the following collection of closed surfaces, $\mathcal{M} = \{\Stwo\}, \{\Stwo,\Ttwo\}$ or $\{\Stwo,\Ttwo,\RPtwo\}$.
	A \emph{2-dimensional Jakobsche weak system} associated to the collection $\mathcal{M}$
    is an inverse sequence $\mathcal{J} = (\{\Omega_n\}_{n\in\N}, \{\phi_n\}_{n\in\N_{> 1}})$ of closed surfaces $\Omega_n$, with a compatible metric $d_n$, and continuous maps $\phi_n:\Omega_{n} \to \Omega_{n-1}$, 
    together with a closed embedded disk $D_n \subseteq \Omega_n$ such that:}
	\editA{
	\begin{enumerate}[label=(J\arabic*)]
		\item \label{J: S2}$\Omega_1 = \Stwo$;
		\item \label{J: conn sum}$\Omega_{n+1}$ is obtained from $\Omega_n$ by a connected sum with $M\in\mathcal{M}$ along the disk $D_n$;
		\item \label{J: conn sum 1}the map $\phi_{n}$ restricts to a homeomorphism from $\phi_{n}^{-1} (\Omega_{n-1} - \interior(D_{n-1}))$ to $\Omega_{n-1} - \interior(D_{n-1})$;
		\item \label{J: D_n are disjoint} for all $m>n\in\N$ we have $\phi_{m,n}(D_m)\cap \partial D_n = \emptyset$ where $\phi_{m,n} = \phi_{m}\circ\ldots\circ \phi_{n+1}$;
		\item \label{J: denseness} for all $n\in \N$ the union \[\bigcup \{\phi_{m,n}(D_m)\;|\;m\ge n\}\] is dense in $\Omega_n$; furthermore, in the case of $\mathcal{M} = \{\Stwo,\Ttwo\}$ we want the union corresponding to disks on which we perform a connected sum with $\Ttwo$ to be dense, and in the case $\mathcal{M} = \{\Stwo,\RPtwo,\Ttwo\}$ we want the union corresponding to disks on which we perform a connected sum with $\RPtwo$ to be dense.
		\item \label{J: diam D_n to 0} $\diam_{d_n} (\phi_{m,n}(D_m))\to  0$ as $m\to \infty$ where the diameter is with respect to the metric $d_n$ on $\Omega_n$.
	\end{enumerate}
	The inverse limit $\invlim \Omega_n$ is the \emph{Jakobsche space} of the 2-dimensional Jakobsche weak system $\mathcal{J}$, associated to the collection $\mathcal{M}$.}
\end{definition}

\editA{
\begin{remark}
\begin{enumerate}
    \item In a general Jakobsche weak system, as defined in \cite{swiatkowski2013treesofcompacta}, the collection of manifolds $\mathcal{M}$ is an arbitrary collection of compact manifolds (of the same dimension). Therefore, extra caution is used in defining the connected sums. However in dimension two the above conditions suffice. 
    \item Another small difference is that in \cite{swiatkowski2013treesofcompacta} multiple connected sums are allowed at each step.
    \item A more significant difference between this account of the definition and the original one is that in \ref{J: denseness} one should require that for every $M\in \mathcal{M}$ the collection \[\bigcup \{\phi_{m,n}(D_m)\;|\;m\ge n, \Omega_{m+1} = \Omega_m \# M\}\] is dense in $\Omega_n$.
    In our setting, a connected sum with $\Stwo$ is trivial so can be ommited. If $\mathcal{M}= \{\Stwo,\Ttwo,\RPtwo\}$, a priori we do not require that this collection is dense for $\Ttwo \in \mathcal{M}$. However, our assumption is sufficient since $\RPtwo \# \RPtwo \# \RPtwo = \RPtwo \# \Ttwo$.
\end{enumerate}

\end{remark}

\begin{theorem}[{Świątkowski,  \cite[Theorem~1.3]{swiatkowski2013treesofmflds}}]\label{thm: uniqueness of Jakobsche}
Given a Jakobsche weak system, the inverse limit $\invlim \Omega_n$ is uniquely determined by the collection $\mathcal{M}$.
\end{theorem}

In particular, we get the following characterisation of Jakobsche spaces for surfaces.
\begin{corollary}\label{cor: limits of Jakobsche}
Given a Jakobsche weak system as above,
\begin{enumerate}
    \item if $\mathcal{M} = \{\Stwo\}$ then $\invlim \Omega_n = \Stwo$,
    \item if $\mathcal{M} = \{\Stwo,\Ttwo\}$ then $\invlim \Omega_n$ is the Pontryagin sphere, and
    \item if $\mathcal{M} = \{\Stwo,\Ttwo,\RPtwo\}$ then $\invlim \Omega_n$ is the Pontryagin surface $\Pi(2)$.
\end{enumerate}
\end{corollary}
\begin{proof}
Each of the spaces above -- the sphere, the Pontryagin sphere, and the Pontryagin surface $\Pi(2)$ -- can easily be described as an inverse limit of a Jakobsche weak system with the corresponding collection $\mathcal{M}$. The corollary now follows from the uniqueness in Theorem \ref{thm: uniqueness of Jakobsche}.  
\end{proof}
}

\editA{
\subsection{Proof of Theorem \ref{main thm}}\label{subsec: proof of main thm}
	As in the rest of the paper, let $G$ be a one-ended hyperbolic group that acts geometrically on a CAT(0) cube complex with surface subgroup hyperplane stabilizers which intersect in finite or virtually $\ZZ$ subgroups.
	Let $\Xi = \partial G$.
	
	For $n\in \N$ we will build 
	\begin{itemize}
	\item a wild animal $(X_n,\Xi)$, with organism structure $\SL{X}_n$,
	\item a tame animal $(O_n,\Omega_n)$, with a metric $d_n$ on $\Omega_n$, with organism structure $\SL{O}_n$,
	\item if $n>1$, a continuous map $\phi_n : \Omega_{n} \to \Omega_{n-1}$, and
	\item organism isomorphisms $f_n:\SL{X}_n \to \SL{O}_n$
	\end{itemize}
	such that 
	\begin{enumerate}[label = (P\arabic*)]
	    \item the system $\mathcal{J}=(\{\Omega_n\}_{n\in \N},\{\phi_n\}_{n\in\N})$ is a 2-dimensional Jakobsche weak system for the collection $\mathcal{M}=\{\Stwo\},\{\Stwo,\Ttwo\}$ or $\{\Stwo,\Ttwo,\RPtwo\}$;
	    \item \label{P: increasing wild animals} $X_n \subseteq X_{n+1}$;
	    \item \label{P: compatability of f_n} the disk $D_n$ is disjoint from $O_n$ and the maps $f_n$ are compatible with the inverse system, i.e $\phi_{n+1}\circ f_{n+1}|_{X_n} = f_n$;
	    \item \label{P: compdiam in Xi}$\compdiam(X_{n_k},\Xi)\to 0$;
	    \item \label{P: compdiam in Omega} for every choice of components $C_m \subseteq \Omega_m - O_m$, $m\ge n$, $\diam _{d_n}\phi_{m,n}(C_m) \to 0$ as $m\to \infty$ where $\phi_{m,n} = \phi_m \circ \ldots \circ \phi_{n+1}$.
	\end{enumerate}
	
	Let us first show that this data suffices for the proof of the theorem. 
	Let $\Omega = \invlim \Omega_n$, and let $\pi_n:\Omega \to \Omega_n$ be the associated projections.
	By the universal property of inverse limits, and property \ref{P: compatability of f_n}, the sequence $f_n$ defines an injective map $f:\bigcup _n X_n \to \Omega$ such that $f|_{X_n}$ is continuous for all $n$. 
	
	Since $\pi_n ^{-1} (O_n) = f(X_n)$, the map $\pi_n:\Omega \to \Omega_n$ defines a continuous map $\pi_n^\sidesof:\Omega \sidesof f(X_n) \to \Omega_n \sidesof O_n$. 
	The components of $\Omega \sidesof f(X_n)$ are the preimages under $\pi_n^\sidesof$ of the components of $\Omega_n \sidesof O_n$. 
	In particular, $(f(X_n),\Omega)$ are animals and $f$ defines an isomorphism of organisms between the organism structure of $(X_n,\Xi)$ and that of $(f(X_n),\Omega)$.
	Property \ref{P: compdiam in Xi} states that $\compdiam(X_n,\Xi) \to 0$, and by property \ref{P: compdiam in Omega}, we have $\compdiam(f(X_m),\Omega) \to 0$. 
	By Lemma \ref{lem: limit argument}, $\Xi \homeo \Omega$, and by Corollary \ref{cor: limits of Jakobsche} we get that $\Xi$ is homeomorphic to either a sphere, a Pontryagin sphere or the Pontryagin space $\Pi(2)$. This is what we wanted to prove.}
	
    \bigskip
	
	\editA{
	We will construct the above data in the following way.
	We will construct a sequence $n_k\in \N$, such that $1=n_0<n_1<n_2<\ldots$, and for each $k\in \N$ we will construct $X_m,O_m,\Omega_m,d_m,\phi_m,f_m,D_{m-1}$ for $n_{k-1}<m\le n_k$.
	We do so by induction on $k$.
	In order to make sure that conditions \ref{J: denseness},\ref{J: diam D_n to 0},\ref{P: compdiam in Xi},\ref{P: compdiam in Omega} are satisfied, we will replace them by the following quantitative versions:
	\begin{enumerate}[label = (J\arabic*')]  \setcounter{enumi}{4}
	    \item \label{J': denseness}For every component $C_{n_{k-1}}\subseteq \Omega_{n_{k-1}} - O_{n_{k-1}}$ there exists $n_{k-1}\le m < n_{k}$ such that $\phi_{m,n_{k-1}}(D_m) \subseteq C_{n_{k-1}}$; moreover, if $\mathcal{M} = \{\Stwo,\Ttwo\}$ then $D_m$ can be assumed to be a disk on which a connected sum with $\Ttwo$ is performed, and if $\mathcal{M} = \{\Stwo,\Ttwo,\RPtwo\}$ then $D_m$ can be assumed to be a disk on which a connected sum with $\RPtwo$ is performed.
	\end{enumerate}
	\begin{enumerate}[label = (P\arabic*')]  \setcounter{enumi}{3}
        \item \label{P': compdiam in Xi} $\compdiam(X_{n_k},\Xi)\le \frac{1}{k}$.
	    \item \label{P': compdiam in Omega} For every choice of components $C_{n_k} \subseteq \Omega_{n_k}- O_{n_k}$, and for all $s\le n_{k-1}$, \[\diam_{d_{s}}( \phi_{n_k,s}(C_{n_k})) \le \frac{1}{k}.\] 
	\end{enumerate}
	
	Note that \ref{J: denseness} follows from \ref{J': denseness} and \ref{P': compdiam in Omega}; \ref{J: diam D_n to 0} follows from \ref{P': compdiam in Omega} and \ref{P: compatability of f_n}; \ref{P: compdiam in Xi} and \ref{P: compdiam in Omega} follow from \ref{P': compdiam in Xi} and \ref{P': compdiam in Omega} respectively.

    \bigskip
    
	We begin our induction with $k=0$. 
	Set $n_0=1$. 
    Let $X_1$ be an H-circle in $\Xi$. 
    Set $\Omega_1=\Stwo$ and let $O_1$ be a simple closed curve on $\Omega_1$. 
    By Lemma \ref{lem: 2 sides} and by the Jordan Curve Theorem, both $X_1 \subseteq \Xi$ and $O_1\subseteq \Omega_1$ are two sided curves, thus there exists an isomorphism $f_1:\SL{X}_1 \to \SL{O}_1$.
    By the choice of $\Omega_1$, \ref{J: S2} is satisfied.
    
    Set $\mathcal{M}$ according to the following cases.
    \begin{description}
        \item[Planar. ] If all PH-graphs are planar, set $\mathcal{M}=\{\Stwo\}$.
        \item[Orientable non-planar. ] If all PH-graphs are orientable, and there is a non-planar PH-graph in $\Xi$, set $\mathcal{M}=\{ \Stwo,\Ttwo\}$.
        \item[Non-orientable.  ] If there exists a non-orientable PH-graph in $\Xi$, set $\mathcal{M}=\{\Stwo,\Ttwo,\RPtwo\}$.
    \end{description}
    
    Now, for $k>0$. Assume that $X_m,O_m,\Omega_m,f_m,D_{m-1},\phi_m$ are defined for all $m\le n_k$ and satisfy the properties above. Let us construct it for all $n_k< m \le n_{k+1}$. We will do so in two steps.
    
    \paragraph{Step 1. Refining the wild animal.}
    Using Lemma \ref{lem: refine wild partition} we can find a PH-graph extension $X'$ of $X_{n_k}$ such that $\compdiam(X',\Xi) \le \frac{1}{k+1}$ and every component of $\Xi-X_{n_k}$ contains a non-planar (resp. a non-orientable) PH-graph if such exists. 
    By induction, the organism structure of $(X_{n_k},\Xi)$ is isomorphic to that of $(O_{n_k},\Omega_{n_k})$ by the map $f_{n_k}$. By Lemma \ref{lem: extending animals} we can find a further extension $X''$ of $X'$, and a tame animal $(O'',\Omega'')$ which are isomorphic, and such that $(O'',\Omega'')$ is obtained from $(O,\Omega)$ by a sequence of $s$ benign mitoses.
    By Lemma \ref{lem: summary of extensions} these benign mitoses can be realized by a sequence of connected sums.
    Set $m_k = n_k + s$, and let $(O_m,\Omega_m)$, for $n_k \le m \le m_k$, be the sequence of benign mitoses, starting with $(O_{n_k},\Omega_{n_k})$ and ending with $(O_{m_k},\Omega_{m_k}) = (O'',\Omega'')$ and let $X_m$ be their isomorphic sequence of elementary extensions of $X_{n_k}$ ending in $X_{m_k}=X''$. Thus, \ref{P: increasing wild animals} is satisfied.
    Let $D_{m-1}$ be the disk in $\Omega_{m-1}$ over which the connected sum is performed, so \ref{J: conn sum} and the first part of \ref{P: compatability of f_n} are satisfied.
    It is easy to construct the maps $\phi_m:\Omega_m \to \Omega_{m-1}$ such that \ref{J: conn sum 1} and the second part of \ref{P: compatability of f_n} are satisfied. 
    By Remark \ref{rem: choosing disks properly in mitoses}, we may assume that $D_m$ were chosen inductively to avoid the curves $\phi_{m,n}^{-1}(\partial D_n)$ for all $n\le m$, so \ref{J: D_n are disjoint} is satisfied as well.
    
    It follows by construction that $\compdiam(X_{m_k},\Xi)\le \frac{1}{k+1}$. Since $X_{n_{k+1}}$ will be an extension of $X_{m_k}$, \ref{P': compdiam in Xi} will be satisfied.
    
    By construction, in every component $E_{n_k}$ of $\Xi-X_{n_k}$ the PH-graph $X_{m_{k}}=X''$ contains a non-planar (resp. a non-orientable) PH-graph if such exists. If $C_{n_k}$ is the corresponding component in $(O_{n_k},\Omega_{n_k})$ under the isomorphism $f_{n_k}$, along the sequence $(O_m,\Omega_m)$ a connected sum with $\Ttwo$ (resp. $\RPtwo$) must be performed along a disk $D_m$ such that $\phi_{m,n_k}(D_m) \subseteq C_{n_k}$. Thus, \ref{J': denseness} is satisfied.
    
    We are left to show \ref{P': compdiam in Omega}.
    
    \paragraph{Step 2. Refining the tame animal.}
    Let $\mathcal{U}$ be an open cover of $\Omega_{m_k}$ such that $\diam_{d_s}(\phi_{m_k,s}(U))\le \frac{1}{k+1}$ for all $s\le n_{k}$ and all $U\in\mathcal{U}$. 
    By Lemma \ref{PONG}, there exists an extension $X'''$ of $X_{m_k}$ whose organism structure is isomorphic to an extension $(O''',\Omega''')$ that is obtained from $(O_{m_k},\Omega_{m_k})$ by a sequence of $t$ benign mitoses, such that each component of $\Omega'''-O'''$ is contained in some $U\in\mathcal{U}$.
    Set $n_{k+1} = m_k + t$, let $(O_m,\Omega_m)$ for $m_k \le m \le n_{k+1}$ be the sequence of $t$ benign mitoses starting with $(O_{m_k},\Omega_{m_k})$ and ending with $(O_{n_{k+1}},\Omega_{n_{k+1}}) = (O''',\Omega''')$. Similar to Step 1, let $X_m,\phi_m, D_m, f_m$ be chosen to satisfy \ref{J: conn sum},\ref{J: conn sum 1},\ref{J: D_n are disjoint},\ref{P: increasing wild animals},\ref{P: compatability of f_n}. 
    
    Clearly, \ref{P': compdiam in Xi},\ref{J': denseness} are still satisfied, and by construction \ref{P': compdiam in Omega} is now satisfied. This completes the proof. \qed
	}

\bibliographystyle{abbrv}
\bibliography{surface_boundaries_bib}

 \end{document}